\newtheorem{cor}{Corollary}[section]
\newtheorem{lemma}[cor]{Lemma}
\newtheorem{teo}[cor]{Theorem}
\newtheorem{algo}{Algorithm}[section]
\newtheorem{rem}[cor]{Remark}
\newtheorem{prob}[cor]{Problem}
\newtheorem{ass}[cor]{Assumption}
\newcommand{\R}{\mathbb{R}}
\newcommand{\mbR}{\mathbb{R}}
\def\mbR{\mathbb{R}}
\def\mbN{\mathbb{N}}
\def\eps{\varepsilon}
\def\mmm{\mathcal}
\def\mathref#1{\ifmmode\mathrm{(\ref{#1})}\else{\rm(\ref{#1})}\fi} 
\def\nref#1{\ifmmode\mathrm{\ref{#1}}\else{\rm\ref{#1}}\fi} 
\def\md{\pd^{\bullet}_{t}}
\def\pd{\partial}
\def\eoc{{\rm eoc}}
\def\intertime{\bar{t}}
\def\embconst{\hat{C}(S^1)}
\numberwithin{equation}{section}
\title{Curve shortening flow coupled to lateral diffusion}
\author{Paola Pozzi  \thanks{Fakult\"at f\"ur Mathematik, Universit\"at Duisburg-Essen,
Thea-Leymann-Stra\ss e 9,
45127 Essen, Germany, \url{paola.pozzi@uni-due.de}} \and and Bj\"orn Stinner  \thanks{Mathematics Institute,
Zeeman Building, 
University of Warwick,
Coventry CV4 7AL, 
United Kingdom, \url{Bjorn.Stinner@warwick.ac.uk}}
}
\begin{document}

\maketitle

\begin{abstract}
We present and analyze a semi-discrete finite element scheme for a system consisting of a geometric evolution equation for a curve and a parabolic equation on the evolving curve. More precisely, curve shortening flow with a forcing term that depends on a field defined on the curve is coupled with a diffusion equation for that field. The scheme is based on ideas of \cite{D99} for the curve shortening flow and \cite{DE07} for the parabolic equation on the moving curve. Additional estimates are required in order to show convergence, most notably with respect to the length element: While in \cite{D99} an estimate of its error was sufficient we here also need to estimate the time derivative of the error which arises from the diffusion equation. Numerical simulation results support the theoretical findings.
\end{abstract}

\bigskip
\noindent \textbf{Keywords:} curve shortening flow with a forcing term, surface PDE, finite element approximation, convergence 
\bigskip
 
\noindent \textbf{MSC(2010):} 65M15, 65M60, 35K65   

\section{Introduction}

We aim for approximating the following problem: Given a closed initial curve $\Gamma_{0}$ and a function $c_{0} : \Gamma_{0} \to \mbR$ find a moving closed curve $\{ \Gamma(t) \}_{t \in [0,T]} \subset \mbR^2$ and a family of fields $c(t) : \Gamma(t) \to \mbR$, $t \in [0,T]$, such that
\begin{align}
v &= \kappa + f(c), \label{mod1:evol} \\
\md c - c \kappa v &= c_{ss}, \label{mod1:spde} \\
\Gamma(0) = \Gamma_{0}, & \quad c(0) = c_{0}. \label{mod1:IC}
\end{align}
Here, $s$ is an arc-length parameter of the actual curve $\Gamma(t)$, $v$ is the (scalar) velocity in the direction of a unit normal field $\nu$, $\kappa$ is the (scalar) curvature, $f : \mbR \to \mbR$ is a coupling function, and $\md$ is the material derivative ($\md c = \partial_t c + v \partial_\nu c$ if $c$ is smoothly extended away from $\Gamma$).

The system consisting \eqref{mod1:evol}, \eqref{mod1:spde} can fairly be regarded as the simplest system coupling a geometric evolution equation to an equation for a conserved field on the evolving manifold. We don't have any specific application in mind for \eqref{mod1:evol}, \eqref{mod1:spde}. But more sophisticated geometric evolution equations and parabolic PDEs on the moving manifold feature, for instance, in cell biology as an effective approach to cell motility \cite{EllStiVen2012}. Problems in soft matter physics such as the relaxation dynamics of two-phase biomembranes can also be modeled by such type of systems \cite{EllSti2010,EllSti2013}. From a mathematical point of view, the evolution of pattern forming PDE systems on deforming surfaces is of general interest, for instance, see \cite{PlaSan-GarPad2004}.

Working in a parametric setting we assume that the curves can be parametrized by a family of functions $u(t) : S^1 \to \mbR^2$, i.e., $\Gamma(t) = u(S^1,t)$. For the initial curve we write $\Gamma_0 = u_0(S^1)$. By $(\cdot)^\perp$ we denote the counter-clockwise rotation by 90 degree in $\mbR^2$. We write $\tau = u_{x} / |u_{x}|$ for a unit tangent field and assume that the orientation is such that $\nu = u_{x}^\perp / |u_{x}|$. For convenience, the field $c$ on the evolving curve will be denoted by $c$ again after transformation to the parameter space. The strong formulation of the geometric equation in the parameter setting then is
\begin{equation} \label{eq:spgeoA}
 0 = u_t - \frac{1}{|u_{x}|} \Big{(} \frac{u_{x}}{|u_{x}|} \Big{)}_{x} - f(c) \frac{u_{x}^\perp}{|u_{x}|} = u_t - \frac{1}{|u_{x}|} \tau_x - f(c) \nu
\end{equation}
while for the PDE on the evolving curve we obtain
\begin{equation} \label{eq:spspdeA}
 0 = c_t + c \frac{|u_x|_t}{|u_x|} - \frac{1}{|u_x|} \Big{(} \frac{c_x}{|u_x|} \Big{)}_x.
\end{equation}

In order to approximate the solution let $Y_h$ denote a finite element space (details will be provided later on in Section \ref{sec:scheme}) and let $X_h = Y_h^2$. Then consider the problem of finding functions $u_h(\cdot,t) \in X_h$ and $c_h(\cdot,t) \in Y_h$, $t\in [0,T]$, such that $u_h(\cdot,0) = u_{h0} := I_h(u_{0})$, $c_h(\cdot,0) = c_{h0} := I_h(c_0)$, and such that for all $\varphi_{h} \in X_h$ and $\zeta_{h} \in Y_h$ at almost all times $t \in [0,T]$
\begin{align}
\int_{S^1} I_h\big{(} u_{ht} \cdot \varphi_{h} \big{)} |u_{hx}| \, dx + \int_{S^1} \frac{u_{hx}}{|u_{hx}|} \cdot \varphi_{hx} \, dx \, &= \int_{S^1} I_h \big{(} f(c_{h})  \varphi_{h} \big{)}\cdot u_{hx}^\perp \, dx, \label{eq:wpgeoh} \\
\frac{d}{dt} \Big{(} \int_{S^1} c_{h} \zeta_{h} |u_{hx}| \, dx \Big{)} + \int_{S^1} \frac{c_{hx} \zeta_{hx}}{|u_{hx}|} \, dx &= 0. \label{eq:wpspdeh}
\end{align}
Here, $I_h$ stands for the interpolation operator for both scalar and vector valued functions. 

With regards to the equation \eqref{eq:spspdeA} for $c$, the approximation by \eqref{eq:wpspdeh} is inspired by \cite{DE07}. The resulting scheme is intrinsic in the sense that it doesn't require any knowledge about the parametrization but only the positions of the vertices that are given in terms of $u_h$ (see Algorithm \ref{algo:solver} below). However, for the numerical analysis we cannot resort to the methods in \cite{DE07} because the moving curve $\Gamma(t)$ is not explicitly given but by the solution $u$ of the geometric equation \eqref{eq:spgeoA}. Its approximation by \eqref{eq:wpgeoh} is based on \cite{Dzi1991} where a scheme for two-dimensional surfaces is presented. The one-dimensional semi-discrete case but with anisotropic surface energy has been analyzed in \cite{D99} (evolution in a plane) and in \cite{PaolaACSF} (higher co-dimension), see also \cite{Dzi1994} for the isotropic case. In addition, there is the forcing term $f(c) u_{x}^\perp / |u_{x}|$ which is of lower order but, because of the $c$ dependence, requires a coupling of the error estimates for $u$ to those for $c$. 

Regarding the estimate for $c$, the main difficulty arises from the term $c |u_x|_t / |u_x|$ in \eqref{eq:spspdeA}. The error of the length element $|u_x| - |u_{hx}|$ already had to be estimated in the $L^\infty([0,T],L^2(S^1)$ norm when proving convergence of the approximation to curve shortening flow in \cite{D99}. However, here we need an estimate for the time derivative of the length element $|u_x|_t - |u_{hx}|_t$. The key observation is that $|u_x|_t$ can be estimated in terms of the squared velocity and the length element, see \eqref{(3.5)} in Lemma \ref{lem:est_le} below. Mimicking these calculations for the error $|u_x|_t - |u_{hx}|_t$ is the content of the novel Lemma \ref{lem:err_est_A} which subsequently proves sufficient to obtain suitable estimates for $c - c_h$. Our results are summarized by:

\begin{teo}
\label{Thm5.3}
Under Assumption \ref{ass:sol} there exists $h_0>0$ such that for all $0 < h \leq h_0 $ there exists a unique solution $(u_h,c_h)$ of \eqref{eq:wpgeoh}, \eqref{eq:wpspdeh}, and the error between the smooth solution and the discrete solutions can be estimated as follows: 
\begin{align}
\int_0^T \int_{S^1} \Big{(} |u_t - u_{ht}|^2 + |c_x - c_{hx}|^2 \Big{)} dx dt  & \leq C \, h^2, \label{eq:theoest1}\\
\sup_{t \in [0,T]} \int_{S^1} \Big{(} |\tau - \tau_h|^2 + |c - c_h|^2 + \big{(} |u_x| - |u_{hx}| \big{)}^2 \Big{)} dx &\leq C \, h^2, \label{eq:theoest2}
\end{align}
with a constant $C>0$. 
The constant depends on the final time $T$, on the bounds $\| f \|_{L^{\infty}}$ and $\| f' \|_{L^{\infty}}$ of the coupling function, on the regularity and bounds $\| u \|_{W^{1,\infty}([0,T],H^2(S^1))}$, $\| c \|_{W^{1,\infty}([0,T],H^1(S^1))}$, and $\| c \|_{L^\infty([0,T],H^2(S^1))}$ of the solution (which includes the bounds $\| u_0 \|_{H^2(S^1)}$ and $\| c_0 \|_{H^1(S^1)}$ of the initial values), on the bound $C^{**}$ from below of the length element, see \eqref{eq:ass_lengthelement} in Assumption~\ref{ass:sol}, and on the constant $\bar{C}$ ruling the grid regularity (cf.~\eqref{(4.1)}). 
\end{teo}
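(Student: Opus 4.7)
The plan is to combine error estimates for the geometric equation \eqref{eq:wpgeoh} and the diffusion equation \eqref{eq:wpspdeh} via a coupled Gronwall argument, run inside a bootstrap/continuation loop that guarantees the discrete length element stays uniformly bounded below and the grid remains regular. Local existence and uniqueness of $(u_h,c_h)$ follows from viewing the semi-discrete system as an ODE on $X_h\times Y_h$: the lumped-mass operator on the left of \eqref{eq:wpgeoh} and the $|u_{hx}|$-weighted mass matrix of \eqref{eq:wpspdeh} are invertible as long as $|u_{hx}|$ is bounded away from zero. I therefore introduce the maximal subinterval $[0,T_h]\subseteq[0,T]$ on which both $|u_{hx}|\ge C^{**}/2$ pointwise and the grid regularity condition \eqref{(4.1)} remain valid, and the task is to show $T_h=T$ once $h\le h_0$.

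On $[0,T_h]$ I would first derive the geometric error estimate following \cite{D99}. Subtracting the continuous weak form of \eqref{eq:spgeoA} from \eqref{eq:wpgeoh} and testing with a finite element representative of the velocity error $u_t-u_{ht}$, then using the identity $|u_x|_t=\tau\cdot u_{xt}$ together with interpolation bounds on $u$, $\tau$ and $|u_x|$, should deliver an estimate of the form
\begin{equation*}
\int_0^t\!\!\int_{S^1}|u_t-u_{ht}|^2|u_{hx}|\,dx\,ds+\sup_{[0,t]}\int_{S^1}\!\Big(|\tau-\tau_h|^2+(|u_x|-|u_{hx}|)^2\Big)dx\le C h^2+C\!\!\int_0^t\!\|f(c)-f(c_h)\|_{L^2}^2 ds+\mathrm{l.o.t.}
\end{equation*}
Because $f$ is Lipschitz with $\|f'\|_{L^\infty}<\infty$, the forcing contribution reduces to controlling $\int_0^t\|c-c_h\|_{L^2}^2 ds$, which links the geometric error to the diffusion error.

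The parabolic error estimate is obtained by subtracting the weak form of \eqref{eq:spspdeA} from \eqref{eq:wpspdeh} and testing with a finite element representative of $c-c_h$. The product-rule structure of the time derivative in \eqref{eq:wpspdeh} produces the term $c\,|u_x|_t/|u_x|$ and its discrete analogue, whose difference contains the factor $|u_x|_t-|u_{hx}|_t$. This is exactly where the new Lemma~\ref{lem:err_est_A} is essential: it estimates $|u_x|_t-|u_{hx}|_t$ in the appropriate norm in terms of $u_t-u_{ht}$ and the errors already being tracked. The rest is standard parabolic testing, yielding, after absorbing the diffusive piece,
\begin{equation*}
\sup_{[0,t]}\int_{S^1}|c-c_h|^2\,dx+\int_0^t\!\!\int_{S^1}|c_x-c_{hx}|^2\,dx\,ds\le C h^2+C\!\!\int_0^t\!\!\int_{S^1}|u_t-u_{ht}|^2\,dx\,ds+\mathrm{l.o.t.}
\end{equation*}

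Finally, I would add the two estimates with appropriate weights so that the $|u_t-u_{ht}|^2$ contribution from the diffusion bound is absorbed into the dissipative term of the geometric bound; Gronwall's inequality then yields \eqref{eq:theoest1}--\eqref{eq:theoest2} with a constant of the advertised dependence. To close the bootstrap I convert the $L^\infty([0,T];L^2)$ control of $|u_x|-|u_{hx}|$ and $\tau-\tau_h$ into $L^\infty$-in-space smallness via an inverse estimate (giving up an $h^{-1/2}$ factor, which is tolerable because the rate is $h$); for $h\le h_0$ these are strictly below the thresholds defining $T_h$, so by continuity $T_h=T$. The main obstacle is precisely the two-way coupling: the $c$-error depends through Lemma~\ref{lem:err_est_A} on $u_t-u_{ht}$, while the $u$-error depends on $c-c_h$ through $f(c)$; Gronwall closes only because the novel length-element estimate produces $|u_x|_t-|u_{hx}|_t$ control in the same norm in which the geometric side naturally generates dissipation.
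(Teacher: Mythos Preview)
Your overall architecture (local ODE existence, bootstrap interval, coupled energy estimates closed by Gronwall, then continuation via an inverse estimate) matches the paper, and you correctly identify Lemma~\ref{lem:err_est_A} as the key to handling $|u_x|_t-|u_{hx}|_t$ in the parabolic estimate. There are, however, two substantive gaps.

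First, your bootstrap hypotheses are incomplete. The condition~\eqref{(4.1)} concerns the \emph{fixed} parameter grid $\{x_j\}$ and is not time-dependent, so it does not belong in the continuation set. What is missing instead are a priori bounds on $c_h$: the paper's bootstrap set~\eqref{assuT} also requires $\|c_h\|_{C([0,T^*],L^\infty(S^1))}$ and $\|c_h\|_{L^2([0,T^*],H^1(S^1))}$ to stay bounded. The $L^\infty$ bound is genuinely needed to close the parabolic error estimate: when you test the difference of \eqref{eq:wpspde} and \eqref{eq:wpspdeh} with $I_h c-c_h$, the term $\tfrac12\int_{S^1}(c-c_h)^2|u_{hx}|_t\,dx$ appears, and after invoking Lemma~\ref{lem:err_est_A} it produces a contribution $\|c-c_h\|_{L^\infty}^2\int_{S^1}|u_t-u_{ht}|^2\,dx$ that you cannot absorb unless $\|c-c_h\|_{L^\infty}$ is controlled a priori. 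The $L^2(H^1)$ bound on $c_h$ is needed in the geometric estimate (Lemma~\ref{lem:err_est_C}) where a term $h^2\int_{S^1}|c_{hx}|^2\,dx$ arises from the mass-lumping remainder. Both bounds must then be recovered with strict inequality at the end to close the continuation, which the paper does.

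Second, the length-element error $\int_{S^1}(|u_x|-|u_{hx}|)^2\,dx$ does \emph{not} appear on the left of the geometric energy estimate obtained by testing with the velocity error. That test naturally controls $\int_{S^1}|\tau-\tau_h|^2\,dx$ (via the identity for $(\tau-\tau_h)\cdot(u_{xt}-u_{hxt})$), and $|\tau-\tau_h|^2$ and $(|u_x|-|u_{hx}|)^2$ are distinct quantities (cf.~\eqref{useful}). In the paper the length-element bound is obtained by a separate per-segment ODE argument (Lemma~\ref{lem:err_est_D}), integrating the splitting~\eqref{eq:splitB} in time. This matters for your closing step: the parabolic estimate (Lemma~\ref{lem:err_est_B}) carries $C\int_{S^1}(|u_x(\bar t)|-|u_{hx}(\bar t)|)^2\,dx$ on the right evaluated at the \emph{current} time, so you cannot simply ``add with weights and absorb'' unless you first have Lemma~\ref{lem:err_est_D} available. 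The paper therefore proceeds sequentially: insert Lemma~\ref{lem:err_est_D} into Lemma~\ref{lem:err_est_C} and Gronwall to obtain~\eqref{fastda}, feed this back into Lemma~\ref{lem:err_est_D}, then substitute both into Lemma~\ref{lem:err_est_B} and Gronwall a second time.
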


Our proof follows the lines of \cite{D99} on anisotropic curve shortening flow though we should mention that for the isotropic curve shortening flow other ideas and techniques have also been used, for instance, see \cite{DecDzi1995}. From a practical point of view, mesh degeneration is an important problem for long-time simulations. We won't address this issue here but for ideas to move vertices in tangential direction as appropriate we refer to \cite{MikSevBal2010}, \cite{BarGarNue2011}, and \cite{BarGarNue2010}. In \cite{BalMik2011} an additional forcing term is accounted for, see also \cite{ChambNovaga} for analytical results on such a problem. Also with regards to PDEs on evolving surfaces there are other methods. For instance, in \cite{OlsReu2014A} a surface reconstruction is used which is based on a fixed bulk mesh and in \cite{LeuLowZha2011} a grid based particle method. Of course, there are also other approaches to surfaces PDEs and geometric PDEs which are not based on any parametrization but on level sets, phase field, or other ideas. We here only refer to the overviews \cite{DDE} and \cite{DziEll2013}. 

We start with specifying the assumptions on the solution to the continuous problem and showing some properties in Section \ref{sec:ass}. After, we carefully describe the finite element approach and, proceeding analogously to the continuous case, show some properties of the semi-discrete solution. Section \ref{sec:errestim} then contains the technical estimates required for convergence which is stated in the section after. In the final section we report on numerical simulation results which support the findings.

\bigskip
\textbf{Acknowledgements:} The authors would like to thank the Isaac Newton Institute for Mathematical Sciences, Cambridge, for support and hospitality during the programme \emph{Coupling Geometric PDEs with Physics for Cell Morphology, Motility and Pattern Formation} where work on this paper was undertaken.

\section{The continuous problem}
\label{sec:ass}

Here and in the following sections, constants which, in general, will vary from line to line in the various computations will be denoted by capital $C$. Moreover we occasionally use the abbreviation
\begin{equation} \label{eq:r}
 r = f(c) \frac{u_{x}^\perp}{|u_{x}|}.
\end{equation}

The finite element approximation consisting of \eqref{eq:wpgeoh} and \eqref{eq:wpspdeh} emerges from the following weak formulation of the system \eqref{eq:spgeoA} and \eqref{eq:spspdeA}:

\begin{prob}[Weak problem] \label{prob:weak}
Find functions $u : S^1 \times [0,T] \to \mbR^2$ and $c : S^1 \times [0,T] \to \mbR$ such that $u(\cdot,0) = u_0$, $c(\cdot,0) = c_0$, and such that for all test functions $\varphi : S^1 \to \mbR^2$ and $\zeta : S^1 \to \mbR$ and almost all times $t \in [0,T]$ 
\begin{align}
\int_{S^1} u_{t} \cdot \varphi |u_{x}| \, dx + \int_{S^1} \frac{u_{x}}{|u_{x}|} \cdot \varphi_{x} \, dx \, &= \int_{S^1} f(c) \varphi \cdot u_{x}^\perp \, dx, \label{eq:wpgeo} \\
\frac{d}{dt} \Big{(} \int_{S^1} c \zeta |u_{x}| \, dx \Big{)} + \int_{S^1} \frac{c_x \zeta_x}{|u_{x}|} \, dx &= 0. \label{eq:wpspde}
\end{align}
\end{prob}

Note that if $\zeta : S^1 \times [0,T] \to \mbR$ is a time dependent test function then \eqref{eq:wpspde} becomes
\begin{equation} \label{eq:wpspde_zt}
\frac{d}{dt} \Big{(} \int_{S^1} c(t) \zeta(t) |u_{x}(t)| \, dx \Big{)} + \int_{S^1} \frac{c_x \zeta_x}{|u_{x}|} \, dx = \int_{S^1} c \zeta_t |u_x| \, dx.
\end{equation}
Clearly, we can not expect the flow to be eternal, since the flow might exhibits singularities in finite time (like the curve shortening flow). We thus make the following assumptions regarding existence, uniqueness, and regularity of the weak solution:

\begin{ass} \label{ass:sol}
Both $f$ and its derivative $f'$ are bounded, 
\[
 \| f \|_{C^0(\mbR)} \leq C, \quad \| f' \|_{C^0(\mbR)} \leq C \qquad \mbox{with some } C > 0.
\]
There is a unique solution $(u,c)$ of \eqref{eq:wpgeo}, \eqref{eq:wpspde} on the time interval $[0,T]$ with initial values $u(\cdot,0)=u_0(\cdot) \in H^2(S^1)$, $c(\cdot,0)=c_0(\cdot) \in H^1(S^1)$ which satisfies
\begin{align*}
u \, & \in W^{1,\infty}([0,T],H^2(S^1)), \\
c \, & \in W^{1,\infty}([0,T],H^1(S^1)) \cap L^\infty([0,T],H^2(S^1)).
\end{align*}
Moreover, there is a constant $C^{**}>0$ such that 
\begin{equation} \label{eq:ass_lengthelement}
 |u_{x}| \geq C^{**} \mbox{ on } S^1 \times [0,T].
\end{equation}
\end{ass}

\begin{rem}
There is a huge literature on the curve shortening flow (and more generally on the mean curvature flow), see for instance \cite{CZ} and \cite{Mantegazza}. There are also results for curve shortening flow with a forcing term. For instance, in \cite{Cesaroni} it is shown that if $f$ is smooth and the initial curve $u_{0}$ is embedded then the maximal existence time of a smooth solution is bounded from below by a quantity that depends on the initial data and $\| f \|_{\infty}$. There don't seem to exist any results on short time well-posedness, regularity, and long-time behavior for our specific type of problem. However we count upon the standard methods for proving short-time well-posedness for parabolic systems to work thanks to the relatively nice elliptic second order structure of the spatial part of the differential operator. We leave these analytical questions for future studies and here focus on approximating the solution as it is postulated in the above Assumption \ref{ass:sol}.
\end{rem}

From now on $(u,c)$ will always denote the solution as specified above. Note that direct consequences of Assumption \ref{ass:sol} are that 
\begin{equation} \label{cbounds}
\|c\|_{C([0,T],L^\infty(S^1))} \leq C, \quad \|c\|_{C([0,T],H^{1}(S^1))} \leq C, \quad \|c\|_{L^2([0,T],H^1(S^1))} \leq C
\end{equation}
with a constant $C>0$.

Although the bounds derived in the next lemma are implied  by the regularity assumptions imposed on the continuous solution, the derived equations and methods of proof will be important to derive discrete analogues later on.
\begin{lemma} \label{lem:est_le}
\begin{enumerate}
\item For the length element we have that
\begin{align}
\label{(3.5)}
|u_x|_t &=-|u_t|^2 \,|u_x| + u_t \cdot r \,|u_x|.
\end{align}
\item Furthermore, 
\begin{align} \label{Cstar}
|u_x| \leq C^{*} 
\end{align}
with a constant $C^{*}>0$. 
\item For ${\intertime} \in [0,T]$ we have that
\begin{equation} \label{eq:est_u_t}
\int_0^{\intertime} |u_t|^2 dt \leq C \quad \mbox{and} \quad \int_0^{\intertime} |u_t - r|^2 dt \leq C \qquad \mbox{on } S^1\,.
\end{equation}
\end{enumerate}
\end{lemma}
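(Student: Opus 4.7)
I would establish the three parts in order, using the identity of part 1 as the algebraic backbone. For part 1, note that the strong form \eqref{eq:spgeoA} can be rewritten as $u_t - r = \tau_x/|u_x|$. Both $\tau_x$ (since $|\tau|=1$ forces $\tau \cdot \tau_x = 0$) and $r = f(c)\nu$ are perpendicular to $u_x = |u_x|\tau$, so $u_t \cdot \tau = 0$ pointwise. Differentiating this orthogonality relation in $x$ yields $\tau \cdot (u_t)_x = -\tau_x \cdot u_t$. On the other hand, a direct calculation gives $|u_x|_t = (u_x \cdot u_{xt})/|u_x| = \tau \cdot (u_t)_x$, and substituting $\tau_x = |u_x|(u_t - r)$ produces \eqref{(3.5)}.

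For part 2, dividing \eqref{(3.5)} by $|u_x|$ and applying Young's inequality to $u_t \cdot r$ gives
\[
(\log|u_x|)_t \,=\, -|u_t|^2 + u_t \cdot r \,\leq\, -\tfrac{1}{2}|u_t|^2 + \tfrac{1}{2}|r|^2 \,\leq\, \tfrac{1}{2}\|f\|_{L^\infty}^2,
\]
since $|r| = |f(c)|$ by \eqref{eq:r}. Integrating in $t$ and exponentiating, together with the pointwise bound $|u_x(\cdot,0)| \leq C\|u_0\|_{H^2(S^1)}$ from the one-dimensional Sobolev embedding, yields \eqref{Cstar}.

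For part 3, I would integrate the identity $(\log|u_x|)_t = -|u_t|^2 + u_t \cdot r$ from $0$ to $\intertime$ and rearrange to
\[
\int_0^{\intertime} |u_t|^2 \, dt \,=\, \log\frac{|u_x(\cdot,0)|}{|u_x(\cdot,\intertime)|} + \int_0^{\intertime} u_t \cdot r \, dt.
\]
The lower bound $|u_x| \geq C^{**}$ from Assumption \ref{ass:sol} (together with the uniform upper bound on $|u_x(\cdot,0)|$) controls the logarithmic term, while a Young inequality on $u_t \cdot r$ absorbing $\tfrac{1}{2}|u_t|^2$ on the left, combined with $|r| \leq \|f\|_{L^\infty}$, produces the first estimate in \eqref{eq:est_u_t}. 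The second estimate is then immediate from $|u_t - r|^2 \leq 2|u_t|^2 + 2|r|^2$ and the uniform bound on $|r|$.

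The only genuinely non-routine step is part 1: one has to recognise that the purely normal character of the velocity, encoded by $u_t \cdot \tau = 0$, reduces the time derivative of $|u_x|$ to an algebraic expression in $|u_t|^2$ and $u_t \cdot r$, rather than a PDE involving spatial derivatives. Once \eqref{(3.5)} is in hand, parts 2 and 3 are standard Grönwall-type manipulations. As the authors emphasise in the introduction, it is precisely the discrete analogue of \eqref{(3.5)} that will later drive the novel estimates in Lemma \ref{lem:err_est_A}, so the algebraic form of the identity, and not just the bounds, is what matters.
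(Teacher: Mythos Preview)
Your proof is correct and follows essentially the same route as the paper: the same derivation of \eqref{(3.5)} via $u_t \cdot \tau = 0$ and $\tau_x = |u_x|(u_t - r)$, followed by the same Young-inequality/Gronwall manipulations for parts 2 and 3. The only cosmetic difference is that you work with $(\log|u_x|)_t$ while the paper applies Gronwall directly to $|u_x|_t \leq \tfrac{1}{2}|f(c)|^2 |u_x|$ and then multiplies the part-3 inequality by $|u_x|$ before integrating; your version has the small advantage of making the use of the lower bound $|u_x| \geq C^{**}$ in part 3 explicit, which the paper leaves implicit.
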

\begin{proof}
We have that
\begin{align*}
|u_x|_t = \tau \cdot (u_{t})_x = (\tau \cdot u_t)_x - \tau_x \cdot u_t = -(u_t -r) \cdot u_t \,|u_x|
\end{align*}
by \eqref{eq:spgeoA}, \eqref{eq:r}, and the fact that $u_t$ is a normal vector. The second claim follows from the boundedness of $f$ and a Gronwall argument applied to
\begin{equation}\label{miserve}
 |u_x|_t \leq -|u_t|^2 |u_x| +| u_t | \, | r| \,|u_x| \leq -\frac{1}{2}|u_t|^2 |u_x| +\frac{1}{2} |r|^2 \, |u_x| \leq \frac{1}{2} |f(c)|^2 \, |u_x|.
\end{equation}
Finally observe that from \eqref{(3.5)} we know that
$ |u_t|^2 \leq -\frac{|u_x|_t}{|u_x|}  + |u_t| \, |r| \leq - \frac{|u_x|_t}{|u_x|} + \frac{1}{2} |r|^2 + \frac{1}{2} |u_{t}|^2\,,$
whence $$ \frac{1}{2}|u_t|^2 |u_{x}| + |u_x|_t   \leq \frac{1}{2} |r|^2 |u_{x}|.$$
Integration and \eqref{Cstar} gives the third claim. 
\end{proof}

\section{Spatial discretization}
\label{sec:scheme}

Let $S^1= \bigcup_{j=1}^N S_j$ be a decomposition of $S^1$ into segments given by the nodes $x_j$. We think of $S_j$ as the interval $[x_{j-1}, x_{j}] \subset [0,2\pi]$ for $j=1, \ldots, N$. Here and in the following, indices related to the grid have to be considered modulo $N$. For instance, we identify $x_0=x_N$. Let $h_j=|S_j|$ and $h=\max_{j=1,\ldots, N}h_j$ be the maximal diameter of a grid element. We assume that for some constant $\bar{C}>0$ we have 
\begin{align}
\label{(4.1)}
h_j \geq \bar{C}h, \qquad |h_{j+1}-h_j|\leq \bar{C}h^2\,.
\end{align}
Clearly the first inequality yields $\bar{C}h_{j+1} \leq h_j \leq \frac{h_{j+1}}{\bar{C}} $.
For a discretization of \eqref{eq:wpgeo} we introduce the discrete finite dimensional spaces
\begin{equation*}
Y_h := \{ v \in C^0(S^1, \R) \, : \, v|_{S_j} \in P_1 (S_j),\, j=1 \cdots, N\}, \quad X_h = Y_h^2
\end{equation*}
of continuous periodic piecewise affine functions on the grid. The scalar nodal basis functions of $Y_h$ are denoted by $\varphi_j$, $j=1, \ldots, N$, and defined by $\varphi_j(x_i)=\delta_{ij}$. 

For a continuous function $v \in C^0 (S^1, \R)$ let $I_h v \in Y_h$ be the linear interpolate uniquely defined by $I_h v(x_i)= v(x_i)$ for all $i=1, \ldots,N$. For convenience we also denote the interpolation onto $X_h$ by $I_h$. We shall use the standard interpolation estimates (both for scalar and vector valued functions):
\begin{align}
\label{(4.2)}
\| v-I_h v \|_{L^2(S^1)} &\leq C h^k \|v \|_{H^k(S^1)} &\text{ for $k=1,2$}\,,\\
\label{(4.2)bis}
\| (v-I_h v)_x \|_{L^2 (S^1)} & \leq C h \| v \|_{H^2 (S^1)}\,,\\
\label{(4.2)extra}
 \| (I_{h} v)_{x} \|_{L^{2} (S^{1})} & \leq C \| v_{x} \|_{ L^{2}(S^{1})}\, . 
\end{align} 
Recall also the inverse estimates for any  $w_{h} \in Y_{h}$ and $j=1, \ldots, N$:
\begin{align}
\| w_{hx} \|_{L^{2}(S_{j})} & \leq \frac{C}{h_{j}} \| w_{h} \|_{L^{2} (S_{j})} & \quad \overset{\eqref{(4.1)}}{\Longrightarrow} \quad \|w_{hx} \|_{L^{2}(S^1)} & \leq \frac{C}{h} \| w_{h} \|_{L^{2} (S^1)}, \label{IE-1} \\
\| w_{h} \|_{L^{\infty} (S_{j})} & \leq \frac{C}{\sqrt{h_{j}}} \| w_{h} \|_{L^{2}(S_{j}) } & \quad \overset{\eqref{(4.1)}}{\Longrightarrow} \quad \| w_{h} \|_{L^{\infty} (S^1)} & \leq \frac{C}{\sqrt{h}} \| w_{h} \|_{L^{2}(S^1)} \label{IE-2}.
\end{align}

\begin{prob}[Semi-discrete Scheme] \label{prob:semidis}
Find functions $u_h(\cdot,t) \in X_h$ and $c_h(\cdot,t) \in Y_h$, $t\in [0,T]$, of the form
\begin{equation*}
u_h(x,t)= \sum_{j=1}^{N} u_j(t) \varphi_j(x), \quad c_h(x,t) = \sum_{j=1}^{N} c_j(t) \varphi_j(x)
\end{equation*} 
with $u_j(t) \in \R^2$ and $c_j(t) \in \mbR$, such that $u_h(\cdot,0) = u_{h0} := I_h(u_0)$, $c_h(\cdot,0) = c_{h0} := I_h(c_0)$, and such that for all $\varphi_{h} \in X_h$ and $\zeta_{h} \in Y_h$ at almost all times $t \in [0,T]$ \eqref{eq:wpgeoh} and \eqref{eq:wpspdeh} are satisfied. 
\end{prob}

Note that we may want to use a time dependent test function in the equation for $c_h$ of the form $$\zeta_h(x,t) = \sum_{j=1}^N \zeta_j(t) \varphi_j(x).$$ In analogy to \eqref{eq:wpspde_zt} equation \eqref{eq:wpspdeh} then becomes
\begin{equation} \label{eq:wpspde_zth}
 \frac{d}{dt} \Big{(} \int_{S^1} c_{h}(t) \zeta_{h}(t) |u_{hx}(t)| \, dx \Big{)} + \int_{S^1} \frac{c_{hx} \zeta_{hx}}{|u_{hx}|} \, dx = \int_{S^1} c_h \zeta_{ht} |u_{hx}| \, dx.
\end{equation}

Recalling that indices referring to the grid always are understood modulo $N$, let 
\begin{align*}
q_j = |u_j - u_{j-1}|, \qquad \tau_j = \frac{u_j-u_{j-1}}{q_j}, \qquad \nu_{j} = \tau_{j}^{\perp}.
\end{align*}
If we insert $\varphi_j$, $j=1, \ldots,N$, separately for each component of $\varphi_h$ in \eqref{eq:wpgeoh} then we get the following $2 \times N$ ordinary differential equations:
\begin{equation} \label{eq:odegeo}
\frac{q_{j}+q_{j+1}}{2} \dot{u}_{j} + \tau_{j} - \tau_{j+1} = \frac{1}{2} f(c_j) (u_{j+1}-u_{j-1})^\perp,
\end{equation}
and the initial values are given by $u_j(0) = u_0(x_j)$, $j=1,\ldots,N$. With
\begin{equation} \label{nubar}
\bar{\nu}_{j} := \frac{(u_{j+1}- u_{j-1})^{\perp}}{q_{j}+q_{j+1}} = \frac{(q_{j}\tau_{j}^{\perp} + q_{j+1}\tau_{j+1}^{\perp})}{q_{j}+q_{j+1}} = \frac{1}{q_{j}+q_{j+1}} (q_{j}\nu_{j}+ q_{j+1}\nu_{j+1})
\end{equation}
and 
\begin{align}\label{eq:defr_j}
r_j := f(c_j) \bar{\nu}_j
\end{align}
we can rewrite the system \eqref{eq:odegeo} with the initial condition as
\begin{equation} \label{eq:odegeoB}
\begin{array}{rcl}
\dot{u}_{j} + \frac{2}{q_{j}+q_{j+1}} (\tau_{j} -\tau_{j+1}) & = & r_{j}, \\
u_{j}(0) & = & u_0(x_{j}),
\end{array}
\quad \mbox{for }j = 1, \ldots, N.
\end{equation}
Define the piecewise constant function
\[
 h_d : S^1 \to \mbR, \quad h_d(x) = h_{j} \mbox{ for } x\in S_j.
\]
A short calculation shows that another equivalent formulation to \eqref{eq:wpgeoh} is
\begin{multline} \label{eq:wpgeohB}
\int_{S^1} u_{ht}|u_{hx}|\varphi_h\, dx + \frac{1}{6}\int_{S^1} u_{hxt} |u_{hx}| h_d^2 \varphi_{hx} \, dx + \int_{S^1}\frac{u_{hx}}{|u_{hx}|} \varphi_{hx} \, dx \\
= \int_{S^{1}} I_{h}(f(c_{h})) \varphi_h \cdot u_{hx}^{\perp} \, dx + \frac{1}{6} \int_{S^{1}} (I_{h}(f(c_{h})) )_{x} \varphi_{hx} \cdot u_{hx}^{\perp} h_{d}^{2} \, dx.
\end{multline}

Next we aim at giving the discrete equivalents of the results in Lemma \ref{lem:est_le}. 

\begin{lemma}
Let ${\intertime} \in (0,T]$ and assume that $(u_h,c_h)$ is a solution of \eqref{eq:wpgeoh}, \eqref{eq:wpspdeh} for $t \in [0,{\intertime}]$ such that $q_j(t) >0$ for all $j=1, \ldots, N$ and all $t \in [0,{\intertime}]$. 
\begin{enumerate}
 \item For $j=1,\ldots,N$ we have that
\begin{align}
\dot{q}_j &= \tau_j \cdot (r_j -r_{j-1})-\frac{|\tau_{j+1}-\tau_j|^2 }{(q_j + q_{j+1})}  -\frac{|\tau_{j-1}-\tau_j|^2 }{(q_j + q_{j-1})} \label{(4.6)} \\
&= \tau_j \cdot (r_j -r_{j-1})-\frac{(q_j + q_{j+1})}{4}|\dot{u}_{j}-r_j|^2 - \frac{(q_j + q_{j-1})}{4}|\dot{u}_{j-1}-r_{j-1}|^2 \,. \label{(4.6)bis}
\end{align}
 \item Furthermore with a constant $C>0$
\begin{equation} \label{eq:disel}
\begin{split}
\max_{1 \leq j \leq N} q_{j}(t) &\leq C \max_{1 \leq j \leq N} q_{j}(0), \\
\| u_{hx}(\cdot,t) \|_{L^\infty(S^1)} &\leq C \| u_{h0x}(\cdot) \|_{L^\infty(S^1)}.
\end{split}
\end{equation}
 \item Moreover, there is a $C>0$ such that 
\begin{equation} \label{eq:Ah1}
\int_0^{\intertime} (q_j + q_{j+1}) |\dot{u}_j-r_j|^2 dt \leq C h, \quad \int_0^{\intertime} \frac{|\tau_{j\pm1}-\tau_{j}|^{2}}{(q_{j}+q_{j\pm1})} dt \leq C h.
\end{equation}
\end{enumerate}
\end{lemma}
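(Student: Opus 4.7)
The plan is to mirror the proof of Lemma~\ref{lem:est_le} at the discrete level, replacing the PDE~\eqref{eq:spgeoA} by the ODE system~\eqref{eq:odegeoB}. For part~(1), differentiating $q_j^2 = |u_j - u_{j-1}|^2$ gives $\dot q_j = \tau_j \cdot (\dot u_j - \dot u_{j-1})$; substituting $\dot u_j$ and $\dot u_{j-1}$ from \eqref{eq:odegeoB} produces the term $\tau_j \cdot (r_j - r_{j-1})$ together with two contributions of the form $\tau_j \cdot (\tau_{j \pm 1} - \tau_j)$. Since the $\tau_k$ are unit vectors, the polarization identity $\tau_j \cdot (\tau_{j \pm 1} - \tau_j) = -\tfrac{1}{2}|\tau_{j \pm 1} - \tau_j|^2$ yields \eqref{(4.6)}. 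The alternative form \eqref{(4.6)bis} then follows by reading \eqref{eq:odegeoB} backwards as $\tau_{j+1} - \tau_j = \tfrac{q_j + q_{j+1}}{2}(\dot u_j - r_j)$ and analogously for the other index; this is the discrete counterpart of $\tau_x / |u_x| = u_t - r$ used in the continuous proof.

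For part~(2), my plan is to absorb the indefinite term $\tau_j \cdot (r_j - r_{j-1})$ into the nonpositive tangent-square contributions of \eqref{(4.6)}. The key step---which I expect to be the main obstacle---is to exploit the orthogonality $\tau_j \cdot \nu_j = 0$: by definition~\eqref{nubar} this reduces the projection to $\tau_j \cdot \bar\nu_j = \tfrac{q_{j+1}}{q_j+q_{j+1}} \, \tau_j \cdot \nu_{j+1}$, and the identity $\nu_{j+1} - \nu_j = (\tau_{j+1} - \tau_j)^\perp$ produces $|\tau_j \cdot \nu_{j+1}| \leq |\tau_{j+1} - \tau_j|$. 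Together with the boundedness of $f$ this gives
\begin{equation*}
|\tau_j \cdot r_j| \leq \|f\|_\infty \frac{q_{j+1}}{q_j + q_{j+1}} |\tau_{j+1} - \tau_j|
\end{equation*}
and the analogous bound for $r_{j-1}$, so a Young inequality absorbs half of each tangent-square term to leave
\begin{equation*}
\dot q_j \leq -\tfrac{1}{2} \Big( \tfrac{|\tau_{j+1}-\tau_j|^2}{q_j + q_{j+1}} + \tfrac{|\tau_{j-1}-\tau_j|^2}{q_{j-1} + q_j} \Big) + C \|f\|_\infty^2 (q_{j-1} + q_{j+1}).
\end{equation*}
Dropping the nonpositive part and taking the maximum over $j$, a Gronwall argument applied to $Q(t) := \max_j q_j(t)$ delivers the first line of \eqref{eq:disel}; the bound on $\|u_{hx}\|_{L^\infty}$ follows since $|u_{hx}|$ is piecewise $q_j/h_j$ and the grid regularity~\eqref{(4.1)} relates this maximum to $\|u_{h0x}\|_{L^\infty}$ up to the constant $\bar C$.

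For part~(3), I would retain the nonpositive contribution from the absorption in part~(2) and integrate in time over $[0,\intertime]$. The initial value $q_j(0) = |u_0(x_j) - u_0(x_{j-1})| \leq C h$ controls the boundary term, while the $L^\infty$ bound $q_j(t) \leq Ch$ just established in part~(2) bounds the right-hand side. This directly yields the second estimate in \eqref{eq:Ah1}, and the first estimate is immediate from the identity $\tfrac{|\tau_{j+1}-\tau_j|^2}{q_j + q_{j+1}} = \tfrac{q_j + q_{j+1}}{4}|\dot u_j - r_j|^2$ already used to derive \eqref{(4.6)bis}.
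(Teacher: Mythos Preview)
Your proposal is correct and follows essentially the same route as the paper's proof: the derivation of \eqref{(4.6)} and \eqref{(4.6)bis} via $\dot q_j = \tau_j \cdot (\dot u_j - \dot u_{j-1})$ and the polarization identity is identical, the absorption in part~(2) using $\tau_j \cdot \nu_j = 0$ and $|\tau_j \cdot \nu_{j\pm1}| \leq |\tau_{j\pm1}-\tau_j|$ followed by Young's inequality and Gronwall is exactly the paper's argument (the paper bundles the two estimates into a single inequality \eqref{r-r} with a parameter $\epsilon$, but this is cosmetic), and part~(3) is obtained in both cases by retaining the negative tangent-square terms, integrating in time, and using $q_j(t)\leq Ch$ from part~(2).
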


\begin{proof}
From the definition of $q_j$ we obtain by differentiating in time
\[ 
\dot{q}_j= \tau_j \cdot (\dot{u}_j - \dot{u}_{j-1})\,.
\] 
From the system \eqref{eq:odegeoB} together with $\tau_j \cdot(\tau_{j+1}-\tau_j)=-\frac{1}{2}|\tau_{j+1}-\tau_j|^2$ we 
infer that
\[ 
\tau_j \cdot \dot{u}_j =\frac{2}{q_j + q_{j+1}} \tau_j \cdot \bigl(\tau_{j+1}-\tau_j\bigr) + \tau_j \cdot r_j
=-\frac{|\tau_{j+1}-\tau_j|^2}{q_j + q_{j+1}} + \tau_j \cdot r_j \,.
\] 
Arguing similarly for the term $\tau_j \cdot \dot{u}_{j-1}$ one obtains equation \eqref{(4.6)}. Using \eqref{eq:odegeo} we can write
$\tau_{j+1}-\tau_j= \frac{q_j+q_{j+1}}{2} (\dot{u}_j-r_j)$ and \eqref{(4.6)bis} follows which proves the first assertion.

For the second assertion we set $f_j := f(c_j)$ for simplicity. Note that by \eqref{nubar}
\[ 
\tau_{j}\cdot(r_{j}-r_{j-1}) = f_{j}\frac{q_{j+1}}{q_{j}+q_{j+1}}( \tau_{j} \cdot \nu_{j+1}) - f_{j-1}\frac{q_{j-1}}{q_{j}+q_{j-1}}( \tau_{j} \cdot \nu_{j-1}). 
\] 
Since $\frac{q_{j \pm 1}}{q_{j}+q_{j \pm 1}} \leq 1$ we get that
\begin{multline*}
\Big{|} \frac{q_{j+1}}{q_{j} + q_{j+1}} f_{j} \tau_{j} \cdot \nu_{j+1} \Big{|} = \Big{|} \frac{q_{j+1}}{q_{j} + q_{j+1}} \sqrt{q_{j} + q_{j+1}} f_{j} \tau_{j} \cdot \frac{\nu_{j+1} - \nu_{j}}{\sqrt{q_{j} + q_{j+1}}} \Big{|} \\
\leq \frac{1}{2\epsilon} \frac{q_{j+1}^{2}}{q_{j} + q_{j+1}} | f_{j} |^{2} + \frac{\epsilon}{2} \frac{| \nu_{j+1} - \nu_{j} |^{2}}{q_{j} + q_{j+1}} \leq \frac{1}{2\epsilon} q_{j+1} | f_{j} |^{2} + \frac{\epsilon}{2} \frac{| \tau_{j+1} - \tau_{j} |^{2}}{q_{j} + q_{j+1}}
\end{multline*}
and, similarly,
\[
\Big{|} \frac{q_{j-1}}{q_{j} + q_{j-1}} f_{j-1} \tau_{j} \cdot \nu_{j-1} \Big{|} \leq \frac{1}{2\epsilon} q_{j-1} | f_{j-1} |^{2} + \frac{\epsilon}{2} \frac{| \tau_{j-1} - \tau_{j} |^{2}}{q_{j} + q_{j-1}}.
\]
Therefore
\begin{equation} 
\label{r-r}
|\tau_{j}\cdot(r_{j}-r_{j-1})| \leq \frac{1}{2\epsilon} \big{(} q_{j+1} | f_{j} |^{2} + q_{j-1} | f_{j-1} |^{2} \big{)} + \frac{\epsilon}{2} \frac{| \tau_{j+1} - \tau_{j} |^{2}}{q_{j} + q_{j+1}} + \frac{\epsilon}{2} \frac{| \tau_{j-1} - \tau_{j} |^{2}}{q_{j} + q_{j-1}}
\end{equation} 
Equation \eqref{(4.6)} and equation \eqref{r-r} with $\epsilon =1$ yield that
\begin{align*}
\dot{q}_j &\leq |\tau_j \cdot (r_j -r_{j-1})| -\frac{|\tau_{j+1}-\tau_j|^2 }{(q_j + q_{j+1})}  -\frac{|\tau_{j-1}-\tau_j|^2 }{(q_j + q_{j-1})} \\
&\leq \|f\|^{2}_{L^{\infty}(\R)} \frac{1}{2} \big{(} q_{j-1} + q_{j+1} \big{)}.
\end{align*}
Integrating with respect to $t$ we infer that
\[
 q_{j}(t) \leq \max_{1 \leq i \leq N} q_{i}(t) \leq \max_{1 \leq i \leq N} q_{i}(0) + C \int_{0}^{t} \max_{1 \leq i \leq N} q_{i}(t') dt' \,.
\]
Applying a Gronwall argument we obtain the first estimate of \eqref{eq:disel}. The second one is a direct consequence of the first one thanks to \eqref{(4.1)}. 

From \eqref{(4.6)bis} and \eqref{r-r} we infer that
\begin{align*}
&\frac{(q_j + q_{j+1})}{4} |\dot{u}_j-r_j|^2 + \frac{(q_j + q_{j-1})}{4} |\dot{u}_{j-1}-r_{j-1}|^2 \\
& \leq | \tau_j \cdot (r_j -r_{j-1}) |- \dot{q}_j \\
& \leq \epsilon \frac{|\tau_{j+1}-\tau_{j}|^{2}}{(q_{j}+q_{j+1})} + \epsilon \frac{|\tau_{j-1}-\tau_{j}|^{2}}{(q_{j}+q_{j-1})} + C_{\epsilon} \|f\|_{L^\infty(\R)}^{2} (q_{j-1}+q_{j+1}) - \dot{q}_j \\
& = \epsilon \frac{(q_j + q_{j+1})}{4} |\dot{u}_j-r_j|^2 + \epsilon \frac{(q_j + q_{j-1})}{4} |\dot{u}_{j-1}-r_{j-1}|^2 + C_{\epsilon} \|f\|_{L^\infty(\R)}^{2} (q_{j-1}+q_{j+1}) - \dot{q}_j,
\end{align*}
where we have used \eqref{eq:odegeoB} in the last equality. Choosing $\epsilon$ appropriately, integrating with respect to time, and using that $q_{j}(t) = h_{j}|u_{hx}|_{|_{S_{j}}} \leq Ch$ thanks to \eqref{eq:disel}, we obtain the estimates \eqref{eq:Ah1}.
\end{proof}

\section{Error estimates}
\label{sec:errestim}

In this section we prove some estimates that will enable to show convergence of the semi-discrete solutions $(u_h, c_h)$ of \eqref{eq:wpgeoh}, \eqref{eq:wpspdeh} to the solution $(u,c)$ of the continuous problem as specified in Assumption \ref{ass:sol}. For this purpose let us assume that for $h>0$ there is a unique solution $(u_h, c_h)$ for $t \in [0,{\intertime}]$ with some ${\intertime} \in (0,T]$. 

We commence with some calculations for the error of the length element $| u_{x} | - | u_{hx} |$ and show some preliminary estimates in Lemma \ref{lem:err_est_A}. These are used to obtain an estimate of $c - c_h$ in suitable norms, see Lemma \ref{lem:err_est_B}. An estimate of $u - u_h$ in suitable norms (see Lemma \ref{lem:err_est_C}) follows the lines of \cite{D99} and involves an integral term of the error of the length element which we estimate last in Lemma \ref{lem:err_est_D}.

We will use the abbreviations
\begin{align*}
q=|u_x|\,, \qquad q_j = h_j |u_{hx}|_{|_{S_j}}\,, \qquad q_h = |u_{hx}|\,.
\end{align*}
Recalling \eqref{(3.5)} and \eqref{(4.6)bis}, we can write for each grid element $S_j=[x_{j-1},x_j]$ the following equation:
\begin{align}
(h_jq-q_j)_t 
&=-\Bigl\{ \frac{1}{2}h_j q\, |u_t -r|^2  - \frac{1}{4}(q_j + q_{j+1}) |\dot{u}_j-r_j |^2\Bigr \} \nonumber \\
& \quad -\Bigl\{ \frac{1}{2}h_j q\, |u_t-r|^2 - \frac{1}{4}(q_j + q_{j-1}) |\dot{u}_{j-1} -r_{j-1}|^2 \Bigr \} \nonumber \\
& \quad - \Bigl( h_j q \,(u_t -r) \cdot r + \tau_j \cdot(r_j -r_{j-1}) \Bigr) \nonumber \\
&=-B^+ - B^- - \hat{B}\,. \label{eq:splitB}
\end{align}
Using \eqref{nubar} we can write
\begin{align*} 
\hat{B} 
& = \frac{1}{2} h_{j} q \, (u_{t} -r) \cdot \nu f(c) + f(c_{j}) \frac{q_{j+1}}{q_{j}+q_{j+1}} \tau_{j} \cdot \nu_{j+1} \\
& \quad + \frac{1}{2} h_{j} q \, (u_{t} -r) \cdot \nu f(c) - f(c_{j-1}) \frac{q_{j-1}}{q_{j}+q_{j-1}} \tau_{j} \cdot \nu_{j-1}= \hat{B}_{1}+ \hat{B}_{2}.
\end{align*}
Observe that
\[ 
\frac{2}{q_{j}+q_{j+1}} \tau_{j} \cdot \nu_{j+1} 
= \tau_{j} \cdot \frac{2 (\tau_{j+1}-\tau_{j})^{\perp}}{q_{j}+q_{j+1}} = -\nu_{j} \cdot \frac{2 (\tau_{j+1}-\tau_{j})}{q_{j}+q_{j+1}}= \nu_{j} \cdot (r_{j}-\dot{u}_{j}) 
\] 
by \eqref{eq:odegeoB}, so we can write
\begin{align}
\hat{B}_{1} 
&= \frac{1}{2} h_{j} q (f(c)-f(c_{j}))\, (u_{t} -r) \cdot \nu + \frac{1}{2}f(c_{j}) (h_{j}q -q_{j+1}) \, (u_{t} -r) \cdot \nu \nonumber \\
& \quad + \frac{1}{2} f(c_{j}) q_{j+1} (u_{t} -r) \cdot (\nu -\nu_{j} ) + \frac{1}{2} f(c_{j}) q_{j+1} \nu_{j} \cdot [(u_{t} -r) - (\dot{u}_{j} -r_{j})]. \label{eq:Bh1}
\end{align}
Similarly one can show that $\frac{2}{q_{j}+q_{j-1}} \tau_{j} \cdot \nu_{j-1} = -\nu_{j} \cdot (r_{j-1} -\dot{u}_{j-1})$ whence
\begin{align}
\hat{B}_{2} 
& = \frac{1}{2} h_{j} q (f(c)-f(c_{j-1})) \, (u_{t} -r) \cdot \nu + \frac{1}{2} f(c_{j-1}) (h_{j}q -q_{j-1}) (u_{t} -r) \cdot \nu \nonumber \\
& \quad + \frac{1}{2} f(c_{j-1}) q_{j-1} (u_{t} -r) \cdot (\nu -\nu_{j}) + \frac{1}{2} f(c_{j-1}) q_{j-1} \nu_{j} \cdot [(u_{t} -r) - (\dot{u}_{j-1} -r_{j-1})]. \label{eq:Bh2}
\end{align}
Let us also set
\begin{align} 
B^+ 
&= \frac{1}{4} |u_t -r|^2\, (q h_j-q_j) + \frac{1}{4} |u_t -r|^2\, (q h_j-q_{j+1}) \notag \\
&\quad + \frac{1}{4}(q_j + q_{j+1})\bigl( |u_t -r|^2 -|\dot{u}_j -r_j|^2\bigr) \notag \displaybreak[0]\\
&=\frac{1}{4} |u_t -r|^2\, (q h_j-q_j) + \frac{1}{4} |u_t -r|^2\, (q h_j-q_{j+1}) \notag \\
& \quad + \frac{1}{4}(q_j + q_{j+1}) (\dot{u}_j -r_j) \cdot [(u_t -r)-(\dot{u}_j -r_j)] \notag \\
& \quad + \frac{1}{4}(q_j + q_{j+1}) [(u_t -r)-(\dot{u}_j -r_j) ] \cdot(u_t -r) = B^+_1 +B^+_2 +B^+_3 +B^+_4 \,. \label{eq:Bplus}
\end{align}

\begin{lemma} \label{lem:err_est_A}
Assume that ${\intertime} \in (0,T]$ is such that 
\[
 \frac{C^{**}}{2} \leq |u_{hx}| \leq 2 C^{*} \quad \mbox{on } [0,{\intertime}]. 
\] 
Then there exists a constant $C$ 
such that for any time $t \in (0,{\intertime}]$ we have:
\begin{enumerate}
\item On each $S_{j}$ we can write
\[ 
||u_{x}|_{t}- |u_{hx}|_{t}| \leq C \Big (|u_{t}-\dot{u}_{j}|^{2} + |u_{t}-\dot{u}_{j-1}|^{2} \Big) + C L_{j},
\] 
where
\begin{align*}
L_{j} &:=|c-c_j| + |c-c_{j-1}| + |\tau -\tau_j| + |\tau -\tau_{j-1}| + |\tau- \tau_{j+1}| \\
&\quad + |q-\frac{q_{j}}{h_{j}}|+ |q-\frac{q_{j+1}}{h_{j+1}}| + |q-\frac{q_{j-1}}{h_{j-1}}| + |u_t - \dot{u}_j|+ |u_t - \dot{u}_{j-1}| + h .
\end{align*}
\item Moreover
\begin{equation} \label{L2ab}
\sum_{j=1}^{N}\int_{S_{j}}|u_{t}-\dot{u}_{j}|^{2} + |u_{t}-\dot{u}_{j-1}|^{2} dx \leq C h^{2} + C \int_{S^1} |u_t -u_{ht}|^2 dx
\end{equation}
and 
\begin{multline} \label{L1ab}
\sum_{j=1}^{N}\int_{S_{j}}|L_{j}|^{2} dx \leq C \int_{S^1}|c-c_h|^2 dx + C \int_{S^1} |\tau-\tau_h|^2 dx \\
+ C \int_{S^1} (|u_{hx}|-|u_x|)^2 dx + C \int_{S^1} |u_t -u_{ht}|^2 dx + C h^2.
\end{multline}
\end{enumerate}
\end{lemma}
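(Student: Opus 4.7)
The plan is to read (i) off the identity $(h_j q - q_j)_t = -B^+ - B^- - \hat B$ of \eqref{eq:splitB}, combined with the refined splittings \eqref{eq:Bplus}, \eqref{eq:Bh1}, \eqref{eq:Bh2}. Since $|u_{hx}|$ equals $q_j/h_j$ on $S_j$ (and is piecewise constant) while $q=|u_x|$ varies, one has $|u_x|_t - |u_{hx}|_t = (h_j q - q_j)_t/h_j$ on $S_j$. Hence (i) reduces to a termwise bound of every summand in $B^+$, $B^-$, $\hat B$, divided by $h_j$, by $C(|u_t-\dot u_j|^2 + |u_t-\dot u_{j-1}|^2) + C L_j$. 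Parts (ii) and (iii) will then follow by standard FE scaling arguments built on \eqref{(4.2)} and \eqref{IE-2} together with the regularity in Assumption \ref{ass:sol}.

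\textbf{Proof of (i).} The standing assumption $\tfrac12 C^{**} \le |u_{hx}| \le 2C^*$ together with grid regularity \eqref{(4.1)} yields $(q_j + q_{j\pm1})/h_j \le C$ and $|q_{j+1}/h_j - q_{j+1}/h_{j+1}| \le Ch$, while $u\in W^{1,\infty}([0,T];H^2(S^1))$ and $H^1(S^1)\hookrightarrow L^\infty(S^1)$ give $|u_t|, |r| \le C$. The essential auxiliary bound is
\[
|r - r_j| \le C\bigl(|c - c_j| + |\tau - \tau_j| + |\tau - \tau_{j+1}|\bigr),
\]
obtained from $r_j = f(c_j)\bar\nu_j$, the Lipschitz property of $f$, and the convex combination $\nu - \bar\nu_j = (q_j(\nu-\nu_j)+q_{j+1}(\nu-\nu_{j+1}))/(q_j+q_{j+1})$ from \eqref{nubar}. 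With these, $B_1^+/h_j$ and $B_2^+/h_j$ are directly $\le C L_j$, while $B_4^+/h_j$ expands via $(u_t - r) - (\dot u_j - r_j) = (u_t-\dot u_j) - (r - r_j)$ and again gives $C L_j$ after multiplying by the bounded factor $(q_j+q_{j+1})/h_j \cdot |u_t - r|$. The genuinely quadratic contribution arises from $B_3^+/h_j$: one estimates $|\dot u_j - r_j| \le |u_t - \dot u_j| + C + C L_j$ and pairs it with $|u_t - \dot u_j| + C L_j$, then invokes $L_j^2 \le C(L_j + |u_t - \dot u_j|^2 + |u_t - \dot u_{j-1}|^2)$, which holds because every summand of $L_j$ other than the two velocity differences is pointwise bounded. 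The $B^-/h_j$ term is treated identically with $j-1$ in place of $j+1$, and each of the four summands in \eqref{eq:Bh1} and \eqref{eq:Bh2} comprising $\hat B/h_j$ falls into $C L_j$ by the same manipulations.

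\textbf{Proof of (ii).} Split $u_t(x) - \dot u_j = (u_t(x) - u_t(x_j)) + (u_t(x_j) - \dot u_j)$ on $S_j$. Cauchy--Schwarz on $\int_{x_j}^x u_{tx}$ gives $\int_{S_j}|u_t(x) - u_t(x_j)|^2 dx \le h_j^2 \|u_{tx}\|_{L^2(S_j)}^2$. For the constant second piece, $u_t(x_j) - \dot u_j = (I_h u_t - u_{ht})(x_j)$ is a nodal value of a finite-element function, so the inverse estimate \eqref{IE-2} gives $h_j|(I_h u_t - u_{ht})(x_j)|^2 \le C\|I_h u_t - u_{ht}\|_{L^2(S_j)}^2$, and a triangle inequality with \eqref{(4.2)} produces $C(h^2 \|u_t\|_{H^1(S_j)}^2 + \|u_t - u_{ht}\|_{L^2(S_j)}^2)$. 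Summing over $j$ and using $u_t\in L^\infty([0,T];H^1(S^1))$ yields \eqref{L2ab}; the $\dot u_{j-1}$ term is identical.

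\textbf{Proof of (iii) and main obstacle.} Bounding $L_j^2$ by the sum of squares of its summands, each contribution is integrated separately. Terms like $|\tau - \tau_j|^2$ and $|q - q_j/h_j|^2$ reduce immediately to $\int_{S_j}|\tau - \tau_h|^2 dx$ and $\int_{S_j}(|u_x| - |u_{hx}|)^2 dx$ because $\tau_h$ and $|u_{hx}|$ are piecewise constant with the relevant nodal values. Cross-segment terms such as $|\tau - \tau_{j+1}|$ or $|q - q_{j+1}/h_{j+1}|$ are split through $\tau(x_j)$ or $q(x_j)$, and the contribution of the second half is transferred to the neighbouring cell by averaging $|\tau(x_j) - \tau_h(y)|^2$ over $y \in S_{j+1}$ via the fundamental-theorem estimate and $h_j/h_{j+1}\le C$. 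The $|c - c_j|$ and $|c - c_{j-1}|$ terms follow verbatim from (ii) with $c, c_h$ in place of $u_t, u_{ht}$; the $|u_t - \dot u_{j\pm1}|$ summands are already absorbed by \eqref{L2ab}; and $\int_{S_j}h^2 dx = h^2 h_j$ sums to $Ch^2$. Using the uniform bounds on $\|c\|_{H^1}$, $\|u\|_{H^2}$ and $\|\tau_x\|_{L^2}$ from Assumption~\ref{ass:sol} finishes the proof. The main obstacle is the patient bookkeeping in (i): one has to use $\dot u_j - r_j = 2(\tau_{j+1} - \tau_j)/(q_j + q_{j+1})$ and the grouping already introduced in \eqref{eq:Bplus}--\eqref{eq:Bh2} to ensure that the apparently singular factor $(q_j + q_{j+1})^{-1}$ is always absorbed into quantities controlled by $L_j$ or by the quadratic velocity differences, after which each remaining estimate falls into a pattern already available in the paper.
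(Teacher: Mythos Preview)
Your outline follows the paper's approach closely and is correct for parts~(ii) and~(iii) and for most of~(i). There is, however, one genuine slip in your handling of $B_3^+$. You invoke
\[
L_j^2 \le C\bigl(L_j + |u_t - \dot u_j|^2 + |u_t - \dot u_{j-1}|^2\bigr)
\]
on the grounds that ``every summand of $L_j$ other than the two velocity differences is pointwise bounded''. But the lemma's hypothesis bounds only $|u_{hx}|$, not $c_h$, so $|c - c_j|$ and $|c - c_{j-1}|$ are \emph{not} known to be bounded here (the $L^\infty$ control on $c_h$ only enters later, in the hypotheses \eqref{assu} of Lemma~\ref{lem:err_est_B}). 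Your quadratic--to--linear reduction of $L_j^2$ is therefore unjustified under the stated assumptions.

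The paper sidesteps this by splitting $\dot u_j - r_j = (\dot u_j - u_t) + (u_t - r_j)$ and observing that the second summand is bounded because $|r_j| = |f(c_j)\,\bar\nu_j| \le \|f\|_{L^\infty}$; this uses only the boundedness of $f$, not of $c_j$. One then obtains
\[
\frac{|B_3^+|}{h_j} \;\le\; C\,|u_t - \dot u_j|\bigl(|u_t - \dot u_j| + |r - r_j|\bigr) \;+\; C\bigl(|u_t - \dot u_j| + |r - r_j|\bigr),
\]
and the cross term $|u_t - \dot u_j|\,|r - r_j|$ is $\le C|u_t - \dot u_j| \le CL_j$ via $|r - r_j| \le |r| + |r_j| \le C$. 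No $L_j^2$ ever appears. This is a local fix that leaves the remainder of your argument intact.
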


\begin{proof}
As we have assumed that $2C^* \geq q_h \geq C^{**} /2$, the discrete length elements are comparable, in other words
\begin{equation}\label{(27)}
C^{-1} q_{j+1} \leq q_j \leq C q_{j+1}\,.
\end{equation}

Note that $|f(c)-f(c_j)| \leq \| f' \|_{L^\infty(\R)}|c-c_j|$ and 
\begin{equation}\label{argQ}
|q-\frac{q_{j+1}}{h_j}| =|q-\frac{q_{j+1}}{h_{j+1}} + q_{j+1}(\frac{1}{h_{j+1}}-\frac{1}{h_j})| \leq | q-\frac{q_{j+1}}{h_{j+1}}| + \frac{q_{j+1}}{h_{j+1}}\frac{|h_j -h_{j+1}|}{h_j} \leq | q-\frac{q_{j+1}}{h_{j+1}}| +Ch
\end{equation}
(which follows by \eqref{(4.1)}). Thus, using \eqref{Cstar}, 
 \eqref{(27)}, and the bound $|u_{hx}| \leq 2C^*$ we obtain from \eqref{eq:Bh1}, \eqref{eq:Bh2} for some $C>0$ that
\begin{align*}
\frac{|\hat{B}|}{h_j} &\leq C\|u_{t}-r \|_{L^{\infty}} \Big(h+ |c-c_j| + |c-c_{j-1}|+ |\tau- \tau_j| + |q-\frac{q_{j+1}}{h_{j+1}}| + |q-\frac{q_{j-1}}{h_{j-1}}| \Big) \\
& \quad + C \Big ( | r - r_{j-1}| +  |r-r_{j}| + |u_t - \dot{u}_j|+ |u_t - \dot{u}_{j-1}| \Big).
\end{align*}
Observe that on $S_j$ 
\begin{align} \label{r-rj}
|r-r_j| 
&\leq |(f(c)-f(c_j)) \nu|+ |f(c_j)(\nu-\bar{\nu}_j)| \notag \\
&\leq \|f'\|_{L^\infty(\R)} |c-c_j| + \|f\|_{L^\infty (\R)}\frac{q_j}{q_j+q_{j+1}}
|\nu- \nu_j|+ \|f\|_{L^\infty(\R)}\frac{q_{j+1}}{q_j +q_{j+1}}|\nu -\nu_{j+1}| \notag \\
& \leq C |c-c_j| + C (|\tau -\tau_j| + |\tau- \tau_{j+1}|),
\end{align}
and similarly for $|r - r_{j-1}|$. Hence we get
\begin{align*}
\frac{|\hat{B}|}{h_j} &\leq C \left( |c-c_j| + |c-c_{j-1}| + |\tau -\tau_j| + |\tau -\tau_{j-1}| + |\tau- \tau_{j+1}| \right)\\
&+ C \left( |q-\frac{q_{j+1}}{h_{j+1}}| + |q-\frac{q_{j-1}}{h_{j-1}}| \right) + C \left (|u_t - \dot{u}_j|+ |u_t - \dot{u}_{j-1}| \right) + C h.
\end{align*}

Note that $B_{3}^{+}$ defined in \eqref{eq:Bplus} can be written as
\begin{align*}
B_{3}^{+}= \frac{q_j + q_{j+1}}{4} (\dot{u}_j -u_{t}) \cdot [(u_t -\dot{u}_j) - (r -r_j)]  + \frac{q_j + q_{j+1}}{4} (u_{t} -r_j) \cdot [(u_t -\dot{u}_j) - ( r -r_j)]. 
\end{align*}
Using the $L^{\infty}$-bounds for $u_{t}$, $r$ and $r_{j}$ (recall \eqref{eq:r}, \eqref{eq:defr_j}, and $|\bar{\nu}_{j}| \leq 1$), \eqref{(27)}, the bound $|u_{hx}|\leq 2 C^{*}$, embedding theory, and arguments similar to those employed in \eqref{argQ}, and \eqref{r-rj}, we infer that
\begin{align*}
\frac{|B^+|}{h_j} &\leq C h + C |u_{t}-\dot{u}_{j}|^{2} \\
& \quad+ C \left( |q-\frac{q_{j+1}}{h_{j+1}}| + |q-\frac{q_{j}}{h_{j}}| + |u_t - \dot{u}_j| + |c-c_j| +  |\tau -\tau_j| + |\tau- \tau_{j+1}|
\right).
\end{align*}
Arguing similarly for $B^-$, and putting all estimates together we finally obtain from \eqref{eq:splitB} that
\begin{align*}
|\dot{q}-\frac{\dot{q}_j}{h_j}| & \leq C \left( \frac{|\hat{B}|}{h_{j}}  +
\frac{|B^{+}|}{h_{j}} + \frac{|B^{-}|}{h_{j}}
\right)\\
& \leq C \Big (|u_{t}-\dot{u}_{j}|^{2} + |u_{t}-\dot{u}_{j-1}|^{2} \Big)\\
& \quad + C \Big( |c-c_j| + |c-c_{j-1}| + |\tau -\tau_j| + |\tau -\tau_{j-1}| + |\tau- \tau_{j+1}| \\
& \quad \quad \quad + |q-\frac{q_{j}}{h_{j}}|+ |q-\frac{q_{j+1}}{h_{j+1}}| + |q-\frac{q_{j-1}}{h_{j-1}}| + |u_t - \dot{u}_j|+ |u_t - \dot{u}_{j-1}| +h \Big ) 
\end{align*}
which shows the first claim.

As $u_{ht_{|_{S_j}}}=\dot{u}_{j-1} + (\dot{u}_j - \dot{u}_{j-1})\frac{(x-x_{j-1})}{h_j}$, we have that $u_{ht}(x_j)=\dot{u}_j$. On the other hand $I_h u_t (x_j)= u_t(x_j)$. Therefore for $x\in S_j$ we can write 
\begin{multline*}
u_t(x)-\dot{u}_j = u_t (x) - u_t(x_j) + I_h u_t (x_j) - u_{ht} (x_j) \\ 
= \int_{x_j}^x u_{tx} (\xi) \,d\xi + I_h u_t (x_j) - u_{ht} (x_j) \leq \sqrt{h} \|u_t\|_{H^1(S_j)} +I_h u_t (x_j) - u_{ht} (x_j)\,.
\end{multline*}
For $w_h (x): = I_h u_t (x) - u_{ht} (x)$ we can use the inverse estimate \eqref{IE-2}.
Therefore 
\begin{align*}
|u_t(x)-\dot{u}_j |^2 &\leq C h \|u_t\|_{H^1(S_j)}^2 +\frac{C}{h_j} \int_{S_j} (I_h u_t -u_{ht})^2(\xi) \, d\xi \\
&\leq C h \|u_t\|_{H^1(S_j)}^2 +\frac{C}{h_j} \int_{S_j} 
(I_h u_t -u_t )^2 (\xi)\, d\xi + \frac{C}{h_j}\int_{S_j} ( u_t -u_{ht})^2(\xi) \, d\xi \\
& \leq Ch \|u_t\|_{H^1(S_j)}^2 +\frac{C}{h_j} \int_{S_j} ( u_t -u_{ht})^2(\xi) \, d\xi
\end{align*}
by \eqref{(4.1)} and \eqref{(4.2)}. Arguing similarly for the term $|u_t - \dot{u}_{j-1}|$,
integrating, and summing up over the grid intervals we obtain \eqref{L2ab}. 

Regarding the last estimate, observe that for any $y\in S_{j+1}$ and $x\in S_j$ we can write 
\begin{align*}
|q(x) -\frac{q_{j+1}}{h_{j+1}}| & \leq |q(x)-q(y)| + | q(y)- \frac{q_{j+1}}{h_{j+1}}| \leq C\sqrt{h} \| u\|_{H^2 (S_j \cup S_{j+1})} + | q(y)- \frac{q_{j+1}}{h_{j+1}}|.
\end{align*}
Thanks to the continuity of $q$ we can choose $y \in S_{j+1}$ such that 
\begin{equation*}
h_{j+1} (q(y)-q_{h_{|_{S_{j+1}}}})^2 \leq \int_{S_{j+1}} (q(\xi)-q_{h_{|_{S_{j+1}}}})^2 d\xi = \int_{S_{j+1}} (q-q_h )^2 dx\,. 
\end{equation*}
Using this fact and \eqref{(4.1)} yields that
\begin{align*}
\int_{S_j} |q-\frac{q_{j+1}}{h_{j+1}}|^2 dx \leq C h^2 \| u\|_{H^2 (S_j \cup S_{j+1})}^2 + C \int_{S_{j+1}} (q-q_h)^2 dx.
\end{align*}
With similar arguments for $q - q_{j}/h_{j}$ and $q - q_{j-1}/h_{j-1}$ we obtain that 
\begin{multline}\label{bella0}
\int_{S_j} |q-\frac{q_{j}}{h_{j}}|^2+ |q-\frac{q_{j+1}}{h_{j+1}}|^2 + |q-\frac{q_{j-1}}{h_{j-1}}|^2 dx \\
\leq Ch^2 \| u\|_{H^2 (S_j \cup S_{j+1} \cup S_{j-1})}^2 + C \int_{S_j \cup S_{j+1} \cup S_{j-1}} |q-q_h|^2 dx.
\end{multline}
The terms $|c-c_j|$ and $|c-c_{j-1}|$ can be estimated similarly as $|u_{t} - \dot{u}_{j}|$ and $|u_{t} - \dot{u}_{j-1}|$ whence 
\begin{multline}\label{bella1}
\int_{S_j} |c-c_j|^2 + |c-c_{j-1}|^2 + |u_t - \dot{u}_{j-1}|^2 + |u_t-\dot{u}_j |^2  \, dx\\
\leq Ch^2 \|u_t \|_{H^1 (S_j)}^{2} + C \int_{S_j} ( u_t -u_{ht})^2(\xi) \, d\xi + Ch^2 \|c \|_{H^1 (S_j)}^2 + C \int_{S_j} |c-c_h|^2 dx. 
\end{multline}
We can use the boundedness of $|u_x|$ from below to get for any $x$, $y \in S_j \cup S_{j+1}$ 
(suppose $y\leq x$ or change the order of integration otherwise) 
$$|\tau (x)-\tau (y)| \leq \int_y^x |\tau_{x}(\xi)|\, d\xi \leq C\sqrt{2h}(\int_{S_j \cup S_{j+1}} |u_{xx}(\xi)|^2\, d\xi )^{1/2}\,.$$
Choosing $y \in S_{j+1}$ such that $h_{j+1}|\tau(y)-\tau_{j+1}|^2 \leq \int_{S_{j+1}} (\tau-\tau_h)^2 dx$ we can write
\begin{align*}
|\tau(x)-\tau_{j+1}|^2 &\leq |\tau(x)-\tau(y)|^2 + |\tau(y)-\tau_{j+1}|^2 \\
&\leq Ch \| u\|_{H^2(S_j \cup S_{j+1})}^2 + \frac{C}{h} \int_{S_{j+1}} (\tau-\tau_h)^2 dx\,.
\end{align*}
Repeating the same sort of argument for $|\tau- \tau_{j-1}|$ and integrating over $S_j$ we get
\begin{multline}\label{bella2}
\int_{S_j}|\tau-\tau_{j+1}|^2 + |\tau -\tau_j|^2 + |\tau-\tau_{j-1}|^{2} dx \\
\leq Ch^2 \| u\|_{H^2(S_j \cup S_{j+1} \cup S_{j-1})}^2 + C \int_{S_j \cup S_{j+1} \cup S_{j-1}} (\tau-\tau_h)^2 dx \,.
\end{multline}
Putting all estimates together and summing up over the grid intervals \eqref{L1ab} follows.
\end{proof}

\begin{lemma} \label{lem:err_est_B}
Assume that ${\intertime} \in (0,T]$ is such that
\begin{equation} \label{assu}
\begin{split}
\frac{C^{**}}{2} &\leq |u_{hx}| \leq 2 C^{*} \mbox{ on } [0,{\intertime}], \mbox{ and} \\
\|c_{h}\|_{C([0,{\intertime}],L^\infty(S^{1}))} &\leq 2 \embconst \|c\|_{C([0,T],H^{1}(S^1))} 
\end{split}
\end{equation}
where $\embconst$ is a constant for the embedding $H^1(S^1) \hookrightarrow L^\infty(S^1)$. Then the following estimate holds with some constant $C>0$:
\begin{align}
\int_{S^1} |c({\intertime}) - & c_{h}({\intertime})|^2 \, dx + \int_0^{\intertime} \int_{S^1} |c_x - c_{hx}|^2 \, dx dt \nonumber \\
&\leq C \int_{S^1} \big{(} |u_x({\intertime})| - |u_{hx}({\intertime})| \big{)}^2 \, dx + C h^{2} \displaybreak[0] \nonumber \displaybreak[0] \\
&\quad + C\int_{0}^{{\intertime}} \int_{S^1} |c - c_h|^2 \, dx dt + C\int_{0}^{{\intertime}} \int_{S^{1}}|u_{t}-u_{ht}|^{2 } dx dt \nonumber \\
&\quad + C \int_{0}^{{\intertime}} \int_{S^{1}}|\tau-\tau_{h}|^{2 } dx dt + C \int_{0}^{{\intertime}} \int_{S^1} \big{(} |u_x| - |u_{hx}| \big{)}^2 \, dx dt. \label{c-c_h}
\end{align}
\end{lemma}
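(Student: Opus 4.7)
The plan is to decompose the error as $c - c_h = \rho + \theta_h$ with $\rho := c - I_h c$ (the interpolation error) and $\theta_h := I_h c - c_h \in Y_h$ (the discrete error), where $\theta_h(0) = 0$ because $c_h(0) = I_h c_0$. I would derive an energy identity for $\theta_h$: at each time $t$, test the continuous pointwise-in-time relation $\int c_t \zeta |u_x| + \int c\zeta|u_x|_t + \int c_x\zeta_x/|u_x|\,dx = 0$ (equivalent to \eqref{eq:wpspde_zt} after expanding the time derivative) and its discrete counterpart with $\zeta = \zeta_h = \theta_h(\cdot,t)$, then subtract. Splitting the differences via $c_t - c_{ht} = \rho_t + \theta_{ht}$, $c - c_h = \rho + \theta_h$, $c_x - c_{hx} = \rho_x + \theta_{hx}$, and $g := |u_x| - |u_{hx}|$---choosing the splits so that $|u_x|_t - |u_{hx}|_t$ is multiplied by a bounded quantity ($c$ or $c_h$)---and using the identity $\int \theta_{ht}\theta_h|u_{hx}| = \tfrac12 \tfrac{d}{dt}\int\theta_h^2|u_{hx}| - \tfrac12\int\theta_h^2|u_{hx}|_t$, one arrives at a differential identity of the schematic form
\[
\frac{d}{dt}\Bigl(\frac{1}{2}\int \theta_h^2|u_{hx}|\,dx\Bigr) + \int\frac{\theta_{hx}^2}{|u_{hx}|}\,dx = \mathcal{R} - \frac{1}{2}\int\theta_h^2|u_{hx}|_t\,dx.
\]

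The residual $\mathcal{R}$ collects: (i) interpolation remainders involving $\rho$, $\rho_t$, $\rho_x$, handled by \eqref{(4.2)}, \eqref{(4.2)bis} and the regularity of $(u,c)$ to produce an $O(h^2)$ contribution; (ii) cross-terms such as $\int c_t \theta_h g$, $\int \rho\theta_h|u_{hx}|_t$ and $\int c_x\theta_{hx} g/(|u_x||u_{hx}|)$, controlled by Cauchy-Schwarz and Young's inequality producing $\int g^2$ and $\int \theta_h^2$ contributions; (iii) the critical term $\int c \theta_h (|u_x|_t - |u_{hx}|_t)$. For (iii) the pointwise bound of Lemma \ref{lem:err_est_A}---$\bigl||u_x|_t - |u_{hx}|_t\bigr| \leq C(|u_t-\dot u_j|^2 + |u_t-\dot u_{j-1}|^2) + CL_j$---is combined with the $L^\infty$-bound on $c$ and the a priori bound $\|\theta_h\|_{L^\infty}\leq \|I_h c\|_{L^\infty} + \|c_h\|_{L^\infty}\leq C$ furnished by \eqref{assu}; the integrated estimates \eqref{L2ab} and \eqref{L1ab} then deliver precisely the combination $C h^2 + C\int(|u_t-u_{ht}|^2 + |\tau-\tau_h|^2 + g^2 + |c-c_h|^2)$ appearing on the right-hand side of \eqref{c-c_h}. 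The main obstacle is exactly here: the quadratic piece $|u_t-\dot u_j|^2$ must be handled using the $L^\infty$-bound on $\theta_h$, because an inverse estimate \eqref{IE-2} would introduce a prohibitive $h^{-1/2}$-factor.

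To conclude, integrate the identity in time from $0$ to $\intertime$: the LHS yields $\tfrac12 \int \theta_h(\intertime)^2|u_{hx}(\intertime)|\,dx + \int_0^{\intertime}\int \theta_{hx}^2/|u_{hx}|\,dxdt$. A boundary contribution of the form $\int c(\intertime)\theta_h(\intertime) g(\intertime)\,dx$ arising from integrating the $\int c\theta_h g_t$-piece by parts in time is estimated by Young's inequality, splitting into $\epsilon\|\theta_h(\intertime)\|_{L^2}^2$ (absorbed into the LHS using $|u_{hx}|\geq C^{**}/2$) and $C_\epsilon\int(|u_x(\intertime)|-|u_{hx}(\intertime)|)^2\,dx$, precisely the final-time length-element term of \eqref{c-c_h}. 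A small multiple of the dissipation $\int\theta_{hx}^2/|u_{hx}|$ is absorbed from the $\rho_x$-term via Young's. The remaining $\tfrac12 \int_0^{\intertime}\int\theta_h^2|u_{hx}|_t$ is handled using a bound on $|u_{hx}|_t$ (derived from \eqref{(4.6)bis} together with \eqref{eq:disel} and \eqref{eq:Ah1}), and Gronwall's inequality closes the estimate for $\|\theta_h\|_{L^\infty([0,\intertime],L^2(S^1))}^2 + \|\theta_{hx}\|_{L^2([0,\intertime]\times S^1)}^2$. Finally, $|c-c_h|^2 \leq 2|\rho|^2 + 2|\theta_h|^2$ together with the analogous inequality for the gradients and interpolation estimates \eqref{(4.2)}, \eqref{(4.2)bis} for $\rho$ deliver the stated \eqref{c-c_h}.
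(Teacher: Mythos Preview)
Your overall architecture matches the paper's: the same test function $\zeta_h=I_h c-c_h$, the same critical use of Lemma~\ref{lem:err_est_A} for the term carrying $|u_x|_t-|u_{hx}|_t$ (your term (iii), the paper's $K_1$), and the same integration-by-parts-in-time trick that produces the endpoint contribution $C\int_{S^1}(|u_x(\intertime)|-|u_{hx}(\intertime)|)^2\,dx$ (the paper packages this as the total-derivative term $K_3$). The organizational difference---you run the energy identity for $\theta_h$, the paper for $c-c_h$---is cosmetic.

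There is, however, a genuine gap in your treatment of the leftover term $\tfrac12\int_0^{\intertime}\!\int_{S^1}\theta_h^2\,|u_{hx}|_t\,dx\,dt$. You propose to close it via ``a bound on $|u_{hx}|_t$'' extracted from \eqref{(4.6)bis}, \eqref{eq:disel}, \eqref{eq:Ah1} and then Gronwall. But those identities do \emph{not} give a uniform-in-$h$ control of $|u_{hx}|_t$ in any norm that would make this work: on $S_j$ one has $|u_{hx}|_t=\dot q_j/h_j$, and \eqref{(4.6)bis} shows this contains $|\dot u_j-r_j|^2$, for which only $\int_0^{\intertime}(q_j+q_{j+1})|\dot u_j-r_j|^2\,dt\le Ch$ is available from \eqref{eq:Ah1}. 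Neither a pointwise bound nor a bound on $\int_0^{\intertime}\sup_j|\dot q_j|/q_j\,dt$ (which a Gronwall with $L^1$-in-time coefficient would require) follows uniformly in $h$; plugging in the $L^\infty$ bound on $\theta_h$ instead yields an $O(1)$ right-hand side, not $O(h^2)$.

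The fix is exactly the manoeuvre you already use for term (iii): write $|u_{hx}|_t=(|u_{hx}|_t-|u_x|_t)+|u_x|_t$. The smooth part $|u_x|_t$ is bounded pointwise by \eqref{(3.5)} and the assumed regularity of $u$, giving $C\int\theta_h^2\,dx$. For the difference part, apply Lemma~\ref{lem:err_est_A} and the $L^\infty$ bound on $\theta_h$ to absorb the quadratic velocity piece via \eqref{L2ab}, and the $L_j$ piece via \eqref{L1ab}; this is precisely how the paper handles its $K_2$. After this correction the Gronwall step is in fact unnecessary: the resulting $C\int_0^{\intertime}\!\int|c-c_h|^2\,dx\,dt$ is permitted on the right-hand side of \eqref{c-c_h}, so you can simply leave it there.
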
 

\begin{proof}
The difference between the continuous \eqref{eq:wpspde_zt} and the discrete version \eqref{eq:wpspde_zth} reads
\[
\int_{S^1} (c |u_x| - c_{h} |u_{hx}|)_t \zeta_{h} \, dx + \int_{S^1} \Big{(} \frac{c_x}{|u_x|} - \frac{c_{hx}}{|u_{hx}|} \Big{)} \zeta_{hx} \, dx = 0
\]
for all test functions $\zeta_h(x,t)$ of the form $\zeta_h = \sum_j \zeta_j(t) \varphi_j(x)$. Choosing 
\[
 \zeta_h = I_h(c) - c_h = c - c_h + I_h(c) - c
\]
a short calculation yields that
\begin{align}
\frac{d}{dt} \Big{(} \int_{S^1} & \frac{1}{2} (c - c_{h})^2 |u_{hx}| \, dx \Big{)} + \int_{S^1} \frac{|(c - c_h)_x|^2}{|u_{hx}|} \, dx \nonumber \\
&= \int_{S^1} \big{(} c (|u_{hx}| - |u_x|) \big{)}_t (c - c_h) \, dx - \int_{S^1} \frac{1}{2} (c - c_h)^2 |u_{hx}|_t \, dx \nonumber \displaybreak[0] \\
&+ \frac{d}{dt} \Big{(} \int_{S^1} (c |u_x| - c_{h} |u_{hx}|) (c - I_h(c)) \, dx \Big{)} \displaybreak[0] \nonumber \\
&- \int_{S^1} (c |u_x| - c_{h} |u_{hx}|) \big{(} c - I_h(c) \big{)}_t \, dx \displaybreak[0] \nonumber \\
&+ \int_{S^1} \frac{(c - c_h)_x (c - I_h(c))_x}{|u_{hx}|} \, dx \displaybreak[0] \nonumber \\
&+ \int_{S^1} c_x \frac{(c - c_h)_x}{\sqrt{|u_{hx}|}} \frac{|u_x| - |u_{hx}|}{\sqrt{|u_{hx}|} |u_x|} \, dx + \int_{S^1} c_x (I_h(c) - c)_x \frac{|u_x| - |u_{hx}|}{|u_{hx}| \, |u_x|} \, dx \nonumber \\
&= \sum_{j=1}^{7} K_j \label{eq:toest_c}.
\end{align}

Using Lemma \ref{lem:err_est_A} we can write
\begin{align*}
|K_1| &= \left| \int_{S^1} c_t (|u_{hx}| - |u_x|) (c - c_h) \, dx + \int_{S^1} c (|u_{hx}| - |u_x|)_t (c - c_h) \, dx \right| \\
& \leq \left| \int_{S^1} c_t (|u_{hx}| - |u_x|) (c - c_h) \, dx \right| \\
& \quad + C \sum_{j=1}^{N}\int_{S_{j}} |c||c-c_{h}|(|u_{t}-\dot{u}_{j}|^{2} + |u_{t}-\dot{u}_{j-1}|^{2}) dx + C \sum_{j=1}^{N}\int_{S_{j}} |c||c-c_{h}| |L_{j}| dx \displaybreak[0] \\
& \leq \frac{1}{2}\|c_t\|_{L^\infty(S^1)}^{2} \int_{S^1} (|u_{hx}| - |u_x|)^2 \, dx + \frac{1}{2} \int_{S^1} (c - c_h)^2 \, dx \displaybreak[0] \\
& \quad + C \| c\|_{L^{\infty}(S^{1})} \| c-c_{h}\|_{L^{\infty}(S^{1})} \Big{(} h^{2} + \int_{S^{1}} |u_{t}-u_{ht}|^{2} dx \Big{)} \\
& \quad + C \int_{S^{1}} |c|^{2} |c-c_{h}|^{2} dx + C \sum_{j=1}^{N}\int_{S_{j}} |L_{j}|^{2} dx.
\end{align*}
Together with \eqref{cbounds}, the assumptions \eqref{assu}, and \eqref{L1ab} we obtain that 
\begin{align*}
|K_{1}|& \leq C \int_{S^1} (|u_{hx}| - |u_x|)^2 \, dx + C \int_{S^1} (c - c_h)^2 |u_{hx}| \, dx \\
& \quad + C \int_{S^{1}} |u_{t}-u_{ht}|^{2} dx + C \int_{S^1} |\tau-\tau_h|^2 \, dx + C h^{2}.
\end{align*}
Similarly for $K_{2}$, using again Lemma \ref{lem:err_est_A}, \eqref{cbounds}, embedding theory and the assumptions \eqref{assu} to estimate $\| c - c_h \|_{L^\infty(S^1)}$ we can write 
\begin{align*}
|K_{2}| & \leq \frac{1}{2} \int_{S^{1}}|c-c_{h}|^{2} ||u_{hx}|_{t} - |u_{x}|_{t}| dx + \frac{1}{2} \int_{S^{1}}|c-c_{h}|^{2} ||u_{x}|_{t}| dx \displaybreak[0] \\
& \leq C \|c-c_{h} \|_{L^{\infty}(S^{1})}^{2} \sum_{j=1}^{N} \int_{S_{j}} (|u_{t}-\dot{u}_{j}|^{2} + |u_{t}-\dot{u}_{j-1}|^{2}) dx \\
& \quad + C \|c-c_{h} \|_{L^{\infty}(S^{1})} \sum_{j=1}^{N} \Big{(} \frac{1}{2} \int_{S_{j}} |c-c_{h}|^2 dx + \frac{1}{2} \int_{S_{j}} |L_{j}|^2 dx \Big{)} \\
& \quad + C \int_{S^{1}}|c-c_{h}|^{2} dx \displaybreak[0] \\
& \leq C h^{2} + C \int_{S^{1}}|u_{t}-u_{ht}|^{2} dx + C \int_{S^{1}} |c-c_{h}|^{2} dx \\
& \quad + C \int_{S^1} (|u_{hx}|-|u_x|)^2 dx + \int_{S^1} |\tau-\tau_h|^2 dx.
\end{align*} 
For $K_{3}$ we note that by \eqref{cbounds}, \eqref{assu}, and \eqref{(4.2)}
\begin{align}
& \Big |\int_{S^1} (c |u_x| - c_{h} |u_{hx}|) (c - I_h(c)) \, dx \Big | \nonumber \\
& = \Big | \int_{S^1} (c - c_h) |u_{hx}| (c - I_h(c)) \, dx + \int_{S^1} c (|u_x| - |u_{hx}|) (c - I_h(c)) \, dx \Big | \nonumber \\
& \leq \hat{\eps} \int_{S^1} (c - c_h)^2 |u_{hx}| \, dx + C \int_{S^1} (|u_x| - |u_{hx}|)^2 \, dx + C_{\hat{\eps}} h^2 \| c \|_{H^1(S^1)}^2 \label{eq:estK3}
\end{align}
with $\hat{\eps}>0$ that will be picked later on. We will refer to this estimate later on when integrating \eqref{eq:toest_c} with respect to time. 
For the term $K_{4}$ we infer from \eqref{(4.2)} and \eqref{assu} that
\begin{align*}
|K_4| &= \Big{|} \int_{S^1} c (|u_x| - |u_{hx}|) (c_t - I_h(c_t)) \, dx + \int_{S^1} (c - c_{h}) (c_t - I_h(c_t)) |u_{hx}| \, dx \Big{|} \\
&\leq C \int_{S^1} (|u_x| - |u_{hx}|)^2 \, dx + C \int_{S^1} (c - c_h)^2 | u_{hx} | \, dx  + C \|c_t\|_{H^1(S^1)}^{2} h^2.
\end{align*}
By the interpolation estimates \eqref{(4.2)}, \eqref{(4.2)bis}, \eqref{assu}, and embedding theory we have the following estimates for the terms involving spatial gradients (for $\epsilon > 0$ arbitrarily small):
\begin{align*}
|K_5| &\leq \epsilon \int_{S^1} \frac{|(c - c_h)_x|^2}{|u_{hx}|} \, dx + C_{\epsilon} \int_{S^1} \frac{|(c - I_h(c))_x|^2}{|u_{hx}|} \, dx \\
&\leq \epsilon \int_{S^1} \frac{|(c - c_h)_x|^2}{|u_{hx}|} \, dx + C_{\epsilon} \|c\|_{H^2 (S^{1})}^{2} h^2, \displaybreak[0] \\
|K_6| &\leq \epsilon \int_{S^1} \frac{|(c - c_h)_x|^2}{|u_{hx}|} \, dx + C_{\epsilon} \int_{S^1} (|u_x| - |u_{hx}|)^2 \, dx, \displaybreak[0] \\
|K_7| &\leq C\|c\|_{H^2 (S^{1})}^{2} h^2 + C \int_{S^1} (|u_x| - |u_{hx}|)^2 \, dx.
\end{align*}
Summarizing all these estimates we obtain from \eqref{eq:toest_c} that 
we arrive at
\begin{align*}
\frac{d}{dt} \Big{(} \int_{S^1} & \frac{1}{2} |c - c_{h}|^2 |u_{hx}| \, dx \Big{)} + \int_{S^1} \frac{|c_x - c_{hx}|^2}{|u_{hx}|} \, dx \nonumber \\
&\leq \epsilon \int_{S^1} \frac{|c_x - c_{hx}|^2}{|u_{hx}|} \, dx \nonumber \displaybreak[0] \\
&\quad + \frac{d}{dt} \Big{(} \int_{S^1} (c - c_h) |u_{hx}| (c - I_h(c)) \, dx + \int_{S^1} c (|u_x| - |u_{hx}|) (c - I_h(c)) \, dx \Big{)} \displaybreak[0] \nonumber \\
&\quad + C \int_{S^1} |c - c_h|^2 |u_{hx}| \, dx + C \int_{S^{1}}|u_{t}-u_{ht}|^{2 } dx \nonumber \\
&\quad + C \int_{S^1} |\tau-\tau_{h}|^{2 } dx + C_{\epsilon} \int_{S^1} \big{(} |u_x| - |u_{hx}| \big{)}^2 \, dx + C_{\epsilon} h^2.
\end{align*}
Integrating with respect to time from $0$ to ${\intertime}$, using \eqref{eq:estK3}, \eqref{assu}, and embedding theory we get for $\epsilon$ small enough that
\begin{align*}
\int_{S^1} |c({\intertime}) - & c_{h}({\intertime})|^2 \, dx + \int_0^{\intertime} \int_{S^1} |c_x - c_{hx}|^2 \, dx dt \nonumber \\
&\leq C \int_{S^1} |c_0 - c_{h0}|^2 \, dx + \int_{S^1} |(c_{0} |u_{0x}| - c_{0h} |u_{h0x}|) (c_{0} - I_h(c_{0})) |\, dx \displaybreak[0] \nonumber \\
&\quad + C \hat{\eps} \int_{S^1} |c({\intertime}) - c_h({\intertime})|^2 \, dx \displaybreak[0] + C \int_{S^1} \big{(} |u_x({\intertime})| - |u_{hx}({\intertime})| \big{)}^2 \, dx \displaybreak[0] \nonumber \\
&\quad + C \int_{0}^{{\intertime}} \int_{S^1} |c - c_h|^2 |u_{hx}| \, dx dt + C \int_{0}^{{\intertime}} \int_{S^{1}}|u_{t}-u_{ht}|^{2 } dx dt \displaybreak[0] \nonumber \\
&\quad + C \int_{0}^{{\intertime}} \int_{S^{1}} |\tau-\tau_{h}|^{2 } dx dt + C \int_0^{{\intertime}} \int_{S^1} \big{(} |u_x| - |u_{hx}| \big{)}^2 \, dx dt + C_{\hat{\eps}} h^2.
\end{align*}
Note that
\[
 \int_{S^1} |c_0 - c_{h0}|^2 \, dx = \int_{S^1} |c_0 - I_h(c_0)|^2 \, dx \leq C\|c_0\|_{H^1(S^{1})}^{2} h^2
\]
and, similarly with some arguments as used to estimate $K_3$
\begin{align*}
\int_{S^1} |(c_{0} |u_{0x}| - c_{0h} |u_{0hx}|) (c_{0} - I_h(c_{0})) |\, dx \leq C\|c_0\|_{H^1(S^{1})}^{2} h^2 + C \| u_{0}\|_{H^{2}(S^{1})}^{2} h^{2}.
\end{align*}
Choosing $\hat{\eps}$ small enough and using the above estimates for the initial data yields the claimed estimate \eqref{c-c_h}.
\end{proof}

\begin{lemma} \label{lem:err_est_C}
Assume that ${\intertime} \in (0,T]$ is such that
\begin{equation} \label{assu0}
\begin{split}
\frac{C^{**}}{2} &\leq |u_{hx}| \leq 2 C^{*} \mbox{ on } [0,{\intertime}], \\
\|c_{h}\|^2_{L^2([0,{\intertime}],H^1(S^1))} &\leq 4 \|c\|^2_{L^2([0,T],H^1(S^1))}.
\end{split}
\end{equation}
Then the following estimate holds for some $C>0$: 
\begin{multline}
\int_0^{\intertime} \int_{S^1} |u_t - u_{ht}|^2 \,dx dt + \int_{S^1} |\tau({\intertime}) - \tau_h({\intertime})|^2 \, dx \\ 
\leq C \int_0^{\intertime} \int_{S^1} |\tau - \tau_h|^2 \, dx dt + C \int_0^{\intertime} \int_{S^1} \big{(} |u_x| - |u_{hx}| \big{)}^2 \, dx dt + C \int_0^{\intertime} \int_{S^1} |c - c_h|^2 \, dx dt + C h^2. \label{eq:est_u}
\end{multline}
\end{lemma}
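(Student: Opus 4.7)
The plan is to follow \cite{D99} closely, adapting the calculation to accommodate the forcing term $f(c)$. First, I subtract the discrete equation \eqref{eq:wpgeoh} tested with $\varphi_h := I_h u_t - u_{ht} \in X_h$ from the continuous equation \eqref{eq:wpgeo} tested with $\varphi := u_t - u_{ht}$. The two test functions differ by $u_t - I_h u_t$, which via the interpolation estimates \eqref{(4.2)}, \eqref{(4.2)bis} and the regularity $u \in W^{1,\infty}([0,T],H^2(S^1))$ produces pure $O(h^2)$ error terms. For the first LHS term I add and subtract to extract the coercive quantity $\int_{S^1}|u_t-u_{ht}|^2|u_x|\,dx$, using a mass-lumping estimate $\|I_h(fg)-fg\|_{L^1} \leq Ch^2\|f_x\|_{L^2}\|g_x\|_{L^2}$ for $f,g\in Y_h$; the remainder is bounded, after Young's inequality and \eqref{assu0}, by $C\int(|u_x|-|u_{hx}|)^2\,dx + Ch^2$ plus a small absorbable multiple of the coercive term.

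The key step is for the spatial-derivative LHS term. After separating off the interpolation error $\int_{S^1}\tau_h\cdot(u_t-I_h u_t)_x\,dx$ (bounded using \eqref{(4.2)bis} and \eqref{assu0}), the main contribution is $\int_{S^1}(\tau-\tau_h)\cdot(u_{xt}-u_{hxt})\,dx$. Using $|u_x|_t = \tau\cdot u_{xt}$, $|u_{hx}|_t = \tau_h\cdot u_{hxt}$ together with the algebraic identity $(|u_x|+|u_{hx}|)(1-\tau\cdot\tau_h) = \tfrac{1}{2}(|u_x|+|u_{hx}|)|\tau-\tau_h|^2$, one obtains
\begin{equation*}
\int_{S^1}(\tau-\tau_h)\cdot(u_{xt}-u_{hxt})\,dx = \frac{d}{dt}\int_{S^1}\tfrac{1}{2}(|u_x|+|u_{hx}|)|\tau-\tau_h|^2\,dx - \int_{S^1}\bigl(\tau_t\cdot u_{hx}+\tau_{ht}\cdot u_x\bigr)dx.
\end{equation*}
The residual is handled via the orthogonalities $\tau_t\perp\tau$ and $\tau_{ht}\perp\tau_h$: for instance $\tau_t\cdot u_{hx} = |u_{hx}|\tau_t\cdot(\tau_h-\tau)$, absorbed into $C\int|\tau-\tau_h|^2\,dx$ using a regularity-based $L^\infty$-bound on $\tau_t$; the analogous expansion for $\tau_{ht}\cdot u_x$ produces a term involving $u_{hxt}$ paired with $(\tau-\tau_h)$, which after using the discrete identity \eqref{(4.6)bis} is controlled by a small multiple of $\int_{S^1}|u_t-u_{ht}|^2|u_x|\,dx$ plus standard interpolation errors.

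For the RHS, using $|f(c)-f(c_h)| \leq \|f'\|_\infty|c-c_h|$, the boundedness of $f$ and $u_x^\perp$, and the bound \eqref{assu0} (together with the inverse estimate \eqref{IE-2} to handle $\|c_h\|_{L^\infty}$ where needed) leads to a bound of the form $C\int(|c-c_h|^2+(|u_x|-|u_{hx}|)^2)\,dx + Ch^2$ plus a small absorbable multiple of the coercive dissipation. Collecting everything, integrating in time from $0$ to ${\intertime}$, using $u_h(\cdot,0) = I_h u_0$ to bound $\|\tau_h(0)-\tau(0)\|_{L^2}^2 \leq Ch^2\|u_0\|_{H^2}^2$, and applying Gronwall to close the $\int|\tau-\tau_h|^2$ term yields \eqref{eq:est_u}.

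I expect the main obstacle will be the treatment of the residual $\int_{S^1}\tau_{ht}\cdot u_x\,dx$, because $\tau_{ht}$ involves $u_{hxt}$ which is not a priori bounded in $L^\infty$ by anything of the right order. The remedy is the orthogonality $\tau_{ht}\perp\tau_h$ combined with the discrete identity \eqref{(4.6)bis} (or its underlying algebraic structure), which allows pairing the dangerous $u_{hxt}$-contribution with $(\tau-\tau_h)$-type quantities and making it absorbable into the coercive $L^2$-dissipation of $u_t - u_{ht}$, at the price of interpolation errors of the order already available from \eqref{eq:Ah1}.
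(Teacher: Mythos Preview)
Your overall plan---test with $\varphi_h = I_h u_t - u_{ht}$, extract a dissipation term plus the time derivative of an energy in $\tau-\tau_h$, and estimate the remainder---is the paper's strategy, and your treatment of the mass-lumping terms and of the $c$-dependent forcing terms (the paper's $J_1,\dots,J_6$) is essentially correct. The gap is in your handling of the spatial term $(\tau-\tau_h)\cdot(u_{xt}-u_{hxt})$.

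You use the \emph{symmetric} weight $\tfrac12(|u_x|+|u_{hx}|)$ in the energy, which leaves the residual
\[
R_s \;=\; |u_{hx}|\,\tau_t\cdot(\tau_h-\tau)\;+\;|u_x|\,\tau_{ht}\cdot(\tau-\tau_h).
\]
Each summand is only \emph{first order} in the error. In particular $|\tau_t\cdot u_{hx}|\le C|\tau-\tau_h|$ cannot be ``absorbed into $C\int|\tau-\tau_h|^2$'': applying Young's inequality would leave an $O(1)$ constant on the right, not $O(h^2)$. The second summand is worse, because $\tau_{ht}$ contains $u_{hxt}$, and neither \eqref{(4.6)bis} nor \eqref{eq:Ah1} gives an $L^2$ bound of the required order (indeed $\int_0^{\intertime}\!\int_{S^1}|\tau_{ht}|^2\,dx\,dt$ may be as large as $O(h^{-2})$). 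Pairing $u_{hxt}$ with $(\tau-\tau_h)$ therefore does not produce an absorbable contribution.

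The paper (following \cite{D99}) avoids this entirely by taking the \emph{asymmetric} weight $|u_{hx}|$ in the energy:
\[
(\tau-\tau_h)\cdot(u_{xt}-u_{hxt})
=\frac{\partial}{\partial t}\Bigl(\tfrac12|\tau-\tau_h|^2|u_{hx}|\Bigr)
+u_{xt}\cdot\Bigl(\tau-\tau_h+\bigl(\tau_h-(\tau\cdot\tau_h)\tau\bigr)\tfrac{|u_{hx}|}{|u_x|}\Bigr).
\]
The residual now depends only on the \emph{smooth} field $u_{xt}$, and the bracket satisfies the crucial quadratic bound
\[
\Bigl|\tau-\tau_h+\bigl(\tau_h-(\tau\cdot\tau_h)\tau\bigr)\tfrac{|u_{hx}|}{|u_x|}\Bigr|
\le C|\tau-\tau_h|\,\frac{\bigl||u_x|-|u_{hx}|\bigr|}{|u_x|}+C|\tau-\tau_h|^{2},
\]
so that this term ($I_5$ in the paper) is bounded directly by $C\int|\tau-\tau_h|^2+C\int(|u_x|-|u_{hx}|)^2$. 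No discrete time derivative of $\tau_h$ or $u_{hx}$ ever appears in the residual; \eqref{(4.6)bis} and \eqref{eq:Ah1} are not used here at all.

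A minor point: the lemma as stated keeps $\int_0^{\intertime}\!\int_{S^1}|\tau-\tau_h|^2\,dx\,dt$ on the right-hand side; Gronwall is not applied inside this lemma but only later, in the proof of Theorem~\ref{Thm5.3}, after combining with Lemmas~\ref{lem:err_est_B} and~\ref{lem:err_est_D}.
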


\begin{proof}
The proof of this lemma follows the lines of the analogous Lemma~5.1 in \cite{D99}. However, some additional terms concerning the dependence on $c$ have to be estimated. More precisely, while the terms $I_1, \dots ,I_5$ as defined below in \eqref{formac} have been treated in \cite{D99} already, the terms $J_1, \dots J_6$ depend on $c$ or $c_{h}$ and are new. They can be dealt with using similar arguments, though.

Let us first write down the difference between the continuous geometric equation \eqref{eq:wpgeo} and its discrete version \eqref{eq:wpgeohB}:
\begin{multline*}
\int_{S^1} (u_t |u_x| - u_{ht} |u_{hx}|) \varphi_h + \Big{(} \frac{u_x}{|u_x|} - \frac{u_{hx}}{|u_{hx}|} \Big{)} \varphi_{hx} \, dx \\
= \int_{S^1} f(c) \nu |u_x| \varphi_h \, dx - \int_{S^{1}} I_{h}(f(c_h)) \nu_{h}|u_{hx}| \varphi_h \, dx \\
- \frac{1}{6} \int_{S^1} (I_{h}(f(c_h)))_{x} \nu_{h} |u_{hx}| h_d^2 \varphi_{hx} \, dx 
+ \frac{1}{6} \int_{S^1} u_{hxt} |u_{hx}| h_d^2 \varphi_{hx} \, dx
\end{multline*}
for all $\varphi_h \in X_h$. As a test function we choose
\begin{align*}
\varphi_h = I_h u_t -u_{ht}= (u_t -u_{ht}) + (I_h u_t -u_t) \in X_h\,.
\end{align*}
Observing that
\[ 
\Big{(} \frac{u_x}{|u_x|} - \frac{u_{hx}}{|u_{hx}|} \Big{)} \cdot (u_{tx}-u_{htx}) = \frac{\partial}{ \partial t} \left( \frac{1}{2}|\tau -\tau_{h}|^{2} |u_{hx}|\right) + u_{xt} \cdot \left ( \tau -\tau_{h} + (\tau_{h} - (\tau \cdot \tau_{h}) \tau) \frac{|u_{hx}|}{|u_{x}|} \right),
\] 
some straightforward calculations show that 
\begin{align}
\int_{S^1} |u_t &- u_{ht}|^2 |u_{hx}| \,dx + \frac{d}{d t} \int_{S^1} \frac{1}{2} |\tau - \tau_h|^2 | u_{hx}| \, dx \nonumber \\ 
=& \, \int_{S^1} u_t (|u_{hx}| -|u_x |) (I_h u_t - u_{ht}) \, dx + \frac{1}{6} \int_{S^1} u_{hxt} |u_{hx}| h_d^2 (I_h u_t - u_{ht})_x \, dx \nonumber \\
&+ \int_{S^1} (u_t -u_{ht}) (u_t -I_h u_t) |u_{hx}| \, dx \nonumber \displaybreak[0] \\
&+ \int_{S^1} \Big{(} \frac{u_x}{|u_x|} - \frac{u_{hx}}{|u_{hx}|} \Big{)} (u_t - I_{h} u_t)_x \, dx - \int_{S^1} u_{xt} \cdot \Bigl( \tau -\tau_{h} + (\tau_{h} - (\tau \cdot \tau_{h}) \tau) \frac{|u_{hx}|}{|u_{x}|} \Bigr) \, dx \nonumber \displaybreak[0] \\
&- \int_{S^1} f(c) \nu (|u_{hx}| -|u_x |) (I_h u_t - u_{ht}) \, dx - \frac{1}{6} \int_{S^1} \big{(} I_{h} f(c_h) \big{)}_{x} \nu_{h} |u_{hx}| h_d^2 \big{(} I_h u_t - u_{ht} \big{)}_x \, dx \nonumber \displaybreak[0] \\
&+ \int_{S^1} f(c) (\nu - \nu_{h}) (u_t - u_{ht}) |u_{hx}| \, dx - \int_{S^1} f(c) (\nu -\nu_{h}) (u_t - I_h u_t) |u_{hx}| \, dx \nonumber \displaybreak[0] \\
&+ \int_{S^{1}} \nu_{h} (f(c) - I_{h}(f(c_h))) (u_{t} - u_{ht}) |u_{hx}| \, dx \nonumber \displaybreak[0] \\
&+ \int_{S^{1}} \nu_{h} (f(c) - I_{h}(f(c_h))) (I_{h}u_{t} - u_{t}) |u_{hx}| \, dx \nonumber \\
=& \, I_1 + I_2 + I_3 + I_4 + I_5 + J_1 + J_2 + J_3 + J_4 + J_5 + J_6. \label{formac}
\end{align}

An evaluation of the integrals $I_1$, $I_2$ and $I_3$ is given in \cite[Lemma~5.1]{D99}, therefore we can assert
\begin{align*}
|I_1| &\leq \epsilon \int_{S^1}|u_{ht}-u_t|^2 |u_{hx}|\,dx + C_\epsilon \int_{S^1} (|u_{hx}|-|u_x|)^2 \, dx + C h^4,  \\
I_2 & \leq C h^2, \\ 
|I_3| &\leq \epsilon \int_{S^1} |u_t -u_{ht}|^2 |u_{hx}|\, dx + C_\epsilon h^4,
\end{align*}
with $\epsilon>0$ to be chosen later. 
Note that, for $I_{2}$, one uses Young's inequality $ab \leq a^{2}+ b^{2}/4$ and \eqref{(4.2)extra} to obtain
\begin{align*}
I_{2} &=-\frac{1}{6} \int_{S^{1}} |(I_{h} u_{t})_{x}- u_{hxt}|^{2} |u_{hx}| h_d^2 \, dx + 
\frac{1}{6} \int_{S^{1}} (I_{h} u_{t})_{x} |u_{hx}| h_{d}^{2} ((I_{h} u_{t})_{x} - u_{hxt}) \, dx\\
& \leq \frac{1}{24} \int_{S^{1}} |(I_{h} u_{t})_{x}| |u_{hx}| h_{d}^{2} \, dx \leq Ch^{2}.
\end{align*}
Next we use interpolation \eqref{(4.2)bis} and \eqref{assu0} to obtain that
\[ 
|I_4| = |\int_{S^1} (\tau-\tau_h)\cdot (u_t -I_{h} u_t)_x \, dx | \leq \int_{S^1} |\tau-\tau_h|^2 |u_{hx}| \, dx + C h^2.
\] 
Noting that 
\[
\left |\tau -\tau_{h} + (\tau_{h} -(\tau_{h} \cdot \tau) \tau) \frac{|u_{hx}|}{|u_{x}|} \right| \leq C |\tau -\tau_{h}|  \left |\frac{ |u_x| - |u_{hx}|}{|u_x | } \right| + C |\tau -\tau_{h}|^{2},
\]
by \eqref{assu0} we can infer that
\[ 
|I_5| \leq C \int_{S^1} |\tau_h -\tau |^2 |u_{hx}| \, dx + C \int_{S^1} \big{(} |u_x| -|u_{hx}| \big{)}^2 \, dx.
\] 
The integral $J_1$ can be estimated exactly as $I_1$ because of its similar structure. Using \eqref{assu0}
\[ 
|J_1| \leq \epsilon \int_{S^1} |u_{ht}-u_t|^2 |u_{hx}|\,dx + C_\epsilon \int_{S^1} (|u_{hx}|-|u_x|)^2\, dx + C h^4.
\] 
with $\epsilon>0$ to be chosen later. 
For $J_2$ we note the following using \eqref{assu0}, \eqref{(4.2)extra}, \eqref{IE-1}, \eqref{(4.2)}, and the boundedness of $f'$: 
\begin{align*}
|J_2| &\leq C h^2 \Big{(} \int_{S^1} |(I_{h}(f(c_h)))_{x}|^2 |u_{hx}| \, dx \Big{)}^{1/2} \Big{(} \int_{S^1} |(I_h u_t - u_{ht})_x |^2 |u_{hx}| \, dx \Big{)}^{1/2} \\
&\leq C h \Big{(} \int_{S^1} |(I_{h}(f(c_h)))_{x}|^2 \, dx \Big{)}^{1/2} \Big{(} \int_{S^1} |I_h u_t - u_{ht} |^2 |u_{hx}| \, dx \Big{)}^{1/2} \\
&\leq C h \Big{(} \int_{S^1} |(f(c_h))_{x}|^2 \, dx \Big{)}^{1/2} \Big{(} \int_{S^1} |I_h u_t - u_t |^2 |u_{hx}| \, dx \Big{)}^{1/2} \\
&\quad + C h \Big{(} \int_{S^1} |(f(c_h))_{x}|^2 \, dx \Big{)}^{1/2} \Big{(} \int_{S^1} |u_t - u_{ht} |^2 |u_{hx}| \, dx \Big{)}^{1/2} \displaybreak[0]\\
&\leq C_{\epsilon} h^2 \int_{S^1} |c_{hx}|^2 \, dx + C \| I_h u_t - u_t \|_{L^2(S^1)}^2 + \epsilon \int_{S^1} |u_t - u_{ht} |^2 |u_{hx}| \, dx \displaybreak[0]\\
&\leq \epsilon \int_{S^1} |u_t - u_{ht} |^2 |u_{hx}| \, dx + C_{\epsilon} \Big{(} \int_{S^1} |c_{hx}|^2 \, dx \Big{)} h^2 + Ch^{2}.
\end{align*}
Using Young's inequality, noting that $|\nu - \nu_h| = |\tau - \tau_h|$, and using interpolation estimates we also infer that
\begin{align*}
|J_3| &\leq \epsilon \int_{S^{1}} |u_{t} -u_{ht}|^{2} |u_{hx}| dx + C_\epsilon \int_{S^{1}} |\tau-\tau_{h}|^{2} |u_{hx}| dx,\\
|J_4| &\leq C \int_{S^{1}} |\tau-\tau_{h}|^{2} |u_{hx}| dx + C h^4.
\end{align*}
The second last term $J_5$ can be estimated using \eqref{assu0}, \eqref{(4.2)extra} and the $L^\infty$-bounds for $f$ and $f'$ as follows:
\begin{align*}
|J_5| \leq & \, \epsilon \int_{S^{1}} |u_{t}-u_{ht}|^{2} |u_{hx}| dx + C_{\epsilon} \int_{S^{1}} |f(c)-I_h(f(c_h))|^{2} |u_{hx}| dx \\
\leq & \, \epsilon \int_{S^{1}} |u_{t}-u_{ht}|^{2} |u_{hx}| dx \\
& + C_{\epsilon} \int_{S^{1}} |f(c)-f(c_h)|^{2} dx + C_{\epsilon} \int_{S^{1}} |f(c_h)-I_{h}(f(c_h))|^{2} dx \\
\leq & \, \epsilon \int_{S^{1}} |u_{t} -u_{ht}|^{2} |u_{hx}| dx + C_{\epsilon} \int_{S^{1}} |c-c_h|^{2} dx + C_{\epsilon} h^2 \Big{(}1+ \int_{S^1} |c_{hx}|^2 \, dx \Big{)}.
\end{align*} 
Similarly,
\[ 
|J_6| \leq C \int_{S^{1}} |c-c_h|^{2} dx + C h^2 \Big{(}1+ \int_{S^1} |c_{hx}|^2 \, dx \Big{)}.
\] 
Collecting all the estimates and by embedding theory we obtain from \eqref{formac} that (for $h \leq 1$)
\begin{align*}
\int_{S^1} |u_t &- u_{ht}|^2 |u_{hx}| \,dx + \frac{d}{d t} \int_{S^1} \frac{1}{2} |\tau - \tau_h|^2 | u_{hx}| \, dx \nonumber \\
&\leq C \eps \int_{S^1} |u_t - u_{ht}|^2 |u_{hx}| \, dx + C_{\epsilon} \Big{(} 1 + \int_{S^1} |c_{hx}|^2 \, dx \Big{)} h^2 \nonumber \displaybreak[0] \\
&\quad + C \int_{S^1} |\tau - \tau_h|^2 |u_{hx}| \, dx + C_{\epsilon} \int_{S^1} \big{(} |u_x| - |u_{hx}| \big{)}^2 |u_{hx}| \, dx + C_{\epsilon} \int_{S^1} |c - c_h|^2 \, dx.
\end{align*}
Choosing $\epsilon$ small enough, integrating with respect to time from $0$ to ${\intertime}$ and using \eqref{assu0} we obtain that
\begin{align}
\int_0^{\intertime} \int_{S^1} &|u_t - u_{ht}|^2 \,dx dt + \int_{S^1} |\tau({\intertime}) - \tau_h({\intertime})|^2 \, dx \nonumber \\ 
&\leq C\int_{S^1} | \tau_0 - \tau_{0h} |^2 | u_{0hx} | \, dx + C h^2 \nonumber \displaybreak[0] \\
&\quad + C \int_0^{\intertime} \int_{S^1} |\tau - \tau_h|^2 \, dx dt + C \int_0^{\intertime} \int_{S^1} \big{(} |u_x| - |u_{hx}| \big{)}^2 \, dx dt + C \int_0^{\intertime} \int_{S^1} |c - c_h|^2 \, dx dt. \label{eq:est_u_2}
\end{align}
Note that by
\begin{align}\label{useful} 
|u_x- u_{hx}|^2 = |u_x| |u_{hx}| |\tau -\tau_h |^2 + (|u_x| - |u_{hx}|)^2
\end{align} 
and by interpolation theory \eqref{(4.2)bis} we have that
\[
\int_{S^1} | \tau_0 - \tau_{0h} |^2 | u_{0hx} | \, dx \leq \int_{S^1} \frac{|u_{0x}-u_{0hx}|^2}{|u_{0x}|} \, dx \leq C \int_{S^1} |(u_0 -I_h u_0)_x|^2 \, dx \leq C h^2.
\]
Thus, \eqref{eq:est_u_2} yields the claimed estimate.
\end{proof}

\begin{lemma} \label{lem:err_est_D} 
Assuming that
\begin{equation} \label{assq}
\frac{C^{**}}{2} \leq |u_{hx}| \leq 2 C^{*} \mbox{ on } [0,{\intertime}],
\end{equation}
there exists a constant $C>0$ such that for all $t \in [0,{\intertime}]$
\begin{multline} \label{q-qh}
\int_{S^{1}} (|u_x(t)|-|u_{hx}(t)|)^2 dx \leq C h^2 \\
+ C\int_0^t \int_{S^1} |c-c_h|^2 dx dt' + C \int_0^t \int_{S^{1}} (\tau-\tau_h)^2 \, dx dt' + C \int_0^t \int_{S^{1}} |u_t -u_{ht}|^2 \, dx dt'.
\end{multline}
\end{lemma}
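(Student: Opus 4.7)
The plan is to derive a differential inequality for $\phi(t) := \int_{S^{1}}(|u_{x}(t)| - |u_{hx}(t)|)^{2}\,dx$ and close it with a Gronwall argument, exploiting the pointwise time-derivative estimate from part~1 of Lemma~\ref{lem:err_est_A}. Differentiating,
\[
\dot{\phi}(t) = 2\int_{S^{1}}(|u_{x}| - |u_{hx}|)\,(|u_{x}|_{t} - |u_{hx}|_{t})\,dx,
\]
and by \eqref{Cstar} together with the standing assumption \eqref{assq}, the factor $||u_{x}| - |u_{hx}||$ is uniformly bounded by $3C^{*}$ on $S^{1}\times[0,\intertime]$.

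On each grid element $S_{j}$, part~1 of Lemma~\ref{lem:err_est_A} gives $||u_{x}|_{t} - |u_{hx}|_{t}| \leq C(|u_{t}-\dot{u}_{j}|^{2} + |u_{t}-\dot{u}_{j-1}|^{2}) + CL_{j}$. For the quadratic velocity-difference contribution I absorb one factor of $||u_{x}|-|u_{hx}||$ into the constant $3C^{*}$, so that the integrand remains quadratic (rather than becoming quartic) in $|u_{t}-\dot{u}_{j}|$. For the linear $L_{j}$ contribution I apply Young's inequality, $2||u_{x}|-|u_{hx}|| \cdot CL_{j} \leq (|u_{x}|-|u_{hx}|)^{2} + C^{2}L_{j}^{2}$. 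Integrating over $S_{j}$, summing in $j$, and invoking the bulk bounds \eqref{L2ab} and \eqref{L1ab} from part~2 of Lemma~\ref{lem:err_est_A} then yields
\[
\dot{\phi}(t) \leq C\,\phi(t) + C\int_{S^{1}}\bigl(|c-c_{h}|^{2} + |\tau-\tau_{h}|^{2} + |u_{t}-u_{ht}|^{2}\bigr)\,dx + Ch^{2},
\]
where the absorbing $C\phi(t)$ term arises both from the Young step and from the $(|u_{x}|-|u_{hx}|)^{2}$ summand already present on the right-hand side of \eqref{L1ab}.

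The initial value satisfies $\phi(0) \leq \int_{S^{1}}|u_{0x} - (I_{h}u_{0})_{x}|^{2}\,dx \leq Ch^{2}$ by the interpolation estimate \eqref{(4.2)bis}. A standard Gronwall argument then delivers \eqref{q-qh} after bounding the exponential factors uniformly on the fixed interval $[0,T]$. The only subtle point is the apparent need for an $L^{4}$ control of $|u_{t}-\dot{u}_{j}|$ if one were to take the $L^{2}$ norm of the pointwise time-derivative bound directly; this is circumvented by working with the $L^{1}$ pairing $\int ||u_{x}|-|u_{hx}||\,||u_{x}|_{t}-|u_{hx}|_{t}|\,dx$ and using that one factor is uniformly bounded, so that only the $L^{2}$-type bounds \eqref{L2ab} and \eqref{L1ab} are needed to close the argument.
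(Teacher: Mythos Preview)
Your argument is correct and takes a genuinely different route from the paper's proof. The paper does not differentiate $\phi(t)=\int_{S^1}(|u_x|-|u_{hx}|)^2\,dx$; instead it returns to the decomposition \eqref{eq:splitB} of $(h_jq-q_j)_t$ into $B^+,B^-,\hat{B}$, integrates each term in time \emph{pointwise in $x$} on $S_j$, and controls $\int_0^t|B^\pm|\,dt'$ and $\int_0^t|\hat{B}|\,dt'$ via Cauchy--Schwarz in time together with the a~priori time-integral bounds \eqref{eq:est_u_t} and \eqref{eq:Ah1}. Only afterwards does it square, integrate over $S_j$, sum, and apply Gronwall. Your approach bypasses \eqref{eq:est_u_t} and \eqref{eq:Ah1} entirely: by pairing $||u_x|_t-|u_{hx}|_t|$ against the uniformly bounded factor $||u_x|-|u_{hx}||$ you reduce immediately to the $L^1$-in-space bounds \eqref{L2ab}, \eqref{L1ab} already established in Lemma~\ref{lem:err_est_A}, and close with a standard Gronwall step. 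This is shorter and uses fewer ingredients; the paper's route, in turn, stays closer to the template of \cite{D99} and makes the role of the discrete curvature bounds \eqref{eq:Ah1} explicit, which may be informative when extending the method.
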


\begin{proof}
Note that thanks to the assumption that $2C^* \geq q_h \geq C^{**} /2$ the discrete length elements are comparable, that is $C^{-1} q_{j+1} \leq q_j \leq c q_{j+1}\,$. 

Integrating \eqref{eq:splitB} with respect to $t$ we obtain
\begin{equation} \label{eq:est_le_first}
(h_jq-q_j)(t)= (h_jq-q_j)(0) -\int_0^t B^+ dt' - \int_0^t B^- dt' -\int_0^t \hat{B} \,dt'\,.
\end{equation}
Clearly
\begin{align*}
|h_jq-q_j|(0)= |h_j |u_{0x}|- h_j |u_{h0x}|_{|_{S_j}}| \leq ch |(u_{0}-I_h u_0)_x|\leq ch\sqrt{h_j}\|u_0 \|_{H^2(S_j)}\,.
\end{align*}
Using \eqref{eq:Bh1}, \eqref{eq:Bh2}, and \eqref{Cstar}
we get (in $S_{j}$)
\begin{align*}
& \int_0^t |\hat{B}|\,dt' \\
&\leq ch_{j} \left(\int_{0}^{t}|u_{t} -r|^{2} dt' \right)^{1/2} \left(\int_{0}^{t}|f(c)-f(c_{j})|^{2} + |f(c)-f(c_{j-1})|^{2}dt' \right)^{1/2} \\
& \quad + C \left( \int_{0}^{t} |f(c_{j})|^{2} + |f(c_{j-1})|^{2} dt' \right)^{1/2} \Bigl( \int_0^t |u_t-r|^2 \bigl( |qh_j - q_{j-1}|^2 +|qh_j - q_{j+1}|^2\bigr)\, dt' \Bigr)^{1/2}\\
& \quad + C \left(\int_{0}^{t}|u_{t} -r|^{2} (q_{j-1} +q_{j+1}) dt' \right)^{1/2} \left (\int_{0}^{t} (|f(c_{j})|^{2} + |f(c_{j-1})|^{2}) (q_{j-1} +q_{j+1}) |\tau- \tau_{j}|^{2} dt' \right)^{1/2} \\
& \quad + C \Bigl (\int_0^t q_{j+1} |f(c_{j})|^2 dt' \Bigr)^{1/2}   \Bigl (\int_0^t (q_j + q_{j+1}) |(u_t-r) -(\dot{u}_j-r_j)|^2 dt' \Bigr)^{1/2}\\
& \quad + C \Bigl (\int_0^t q_{j-1} |f(c_{j-1})|^2 dt' \Bigr)^{1/2} \Bigl (\int_0^t (q_j + q_{j-1}) |(u_t-r) -(\dot{u}_{j-1}-r_{j-1})|^2 \, dt'\Bigr)^{1/2}\,.
\end{align*}
Using \eqref{eq:est_u_t}, the fact that $q_k \leq 2 C^* h_k$ for all $k$, $|f(c) -f(c_{k})| \leq \|f'\|_{L^{\infty}} |c-c_k|$ and the boundedness of $f$, $f'$, $u_{t}$ and $r$ we obtain that
\begin{align*}
\int_0^t |\hat{B}|\,dt' &\leq C h \Bigl ( \int_0^t |c-c_j|^2 +|c-c_{j-1}|^2 dt' \Bigr )^{1/2} \\
& \quad + C \Bigl ( \int_0^t  \bigl( |qh_j - q_{j-1}|^2 +|qh_j - q_{j+1}|^2 \bigr) \, dt' \Bigr )^{1/2}
 + C h \Bigl ( \int_{0}^{t}  |\tau- \tau_{j}|^{2} dt' \Bigr )^{1/2} \\
& \quad + C h \Bigl ( \int_0^t |(u_t-r) -(\dot{u}_j-r_j)|^2 dt' \Bigr )^{1/2}
+ C h \Bigl ( \int_0^t |(u_t-r) -(\dot{u}_{j-1}-r_{j-1})|^2 \, dt'\Bigr )^{1/2}.
\end{align*}
Integrating \eqref{eq:Bplus} with respect to $t$ yields
\begin{align*}
\int_0^t |B^+| dt' & \leq C\Bigl ( \int_0^t |u_t-r|^2 dt'\Bigr)^{1/2}
\Bigl ( \int_0^t |u_t-r|^2 \bigl( |qh_j -q_j|^2 +|qh_j -q_{j+1}|^2\bigr)\, dt' \Bigr)^{1/2}\\
& \quad + C \Bigl (\int_0^t (q_j + q_{j+1}) |\dot{u}_j-r_j|^2 dt' \Bigr)^{1/2} \Bigl (\int_0^t(q_j + q_{j+1}) |(u_t-r) -(\dot{u}_j-r_j)|^2 dt' \Bigr)^{1/2}\\
& \quad + C\Bigl ( \int_0^t (q_j + q_{j+1}) |u_t-r|^2 dt'\Bigr )^{1/2} \Bigl ( \int_0^t (q_j + q_{j+1}) |(u_t-r) -(\dot{u}_j-r_j)|^2 \, dt'\Bigr )^{1/2}\,.
\end{align*}
Thanks to \eqref{eq:est_u_t}, \eqref{eq:Ah1}, the bounds for $u_{t}$, $r$, and the fact that $q_k \leq 2 C^* h_k$ for all $k$ we obtain
\begin{multline*}
\int_0^t |B^+| dt' 
\leq C \Bigl ( \int_0^t  \bigl( |qh_j -q_j|^2 +|qh_j -q_{j+1}|^2\bigr)\, dt' \Bigr)^{1/2}
 + C h \Bigl (\int_0^t |(u_t-r) -(\dot{u}_j-r_j)|^2 dt' \Bigr)^{1/2} .
\end{multline*}
Repeating the same arguments for $B^-$ and putting all estimates together we infer from \eqref{eq:est_le_first} and recalling \eqref{r-rj} that
\begin{align*}
|h_j q(t)-q_j(t) | & \leq Ch\sqrt{h_j}\| u_{0} \|_{H^2(S_j)} + Ch \left( \int_0^t |c-c_j|^2 +|c-c_{j-1}|^2 dt' \right)^{1/2} \\
& \quad + C h \Bigl (\int_0^t |(u_t -\dot{u}_j)|^2 dt' \Bigr )^{1/2} 
 + C h \Bigl (\int_0^t |(u_t -\dot{u}_{j-1})|^2 dt' \Bigr )^{1/2} \displaybreak[0] \\
& \quad + C\Bigl ( \int_0^t  \bigl( |qh_j -q_j|^2 +|qh_j -q_{j+1}|^2 +|qh_j -q_{j-1}|^2\bigr)\, dt' \Bigr)^{1/2} \\
& \quad + C h \left (\int_{0}^{t} |\tau- \tau_{j}|^{2} + |\tau - \tau_{j-1}|^{2} + |\tau - \tau_{j+1}|^{2} dt' \right)^{1/2}.
\end{align*}
Squaring the above expression, integrating with respect to space over $S_{j}$, using and \eqref{bella1}, \eqref{bella2}, and \eqref{bella0} leads to 
\begin{align}
\int_{S_j} |h_j q(t) - q_j(t)|^2 \, dx 
& \leq C h^4 \Big{(} \| u_0 \|_{H^2(S_j)}^2 + \int_0^t \|c\|^2_{H^1(S_j)} + \|u_t\|_{H^1(M_j)}^2 + \|u\|_{H^2(M_j)}^2 dt' \Big{)} \nonumber \\
& \quad + C h^2 \int_0^t \int_{S_j} |c-c_h|^2 dx dt' + C h^2 \int_0^t \int_{M_j} (q-q_h)^2 dx dt' \nonumber \displaybreak[0] \\
& \quad + C h^2 \int_0^t \int_{M_j} (\tau-\tau_h)^2 \, dx dt' + C h^2 \int_0^t \int_{M_j} |u_t -u_{ht}|^2 \, dx dt'\,,\label{piove}
\end{align}
where $M_j := S_j \cup S_{j+1} \cup S_{j-1}$. Summing up over all grid elements and using that 
\begin{align*}
\int_{S_j} (h_j q -q_j) ^2 \, dx = h^2_j \int_{S_j} ( q -q_h) ^2 \, dx \geq C h^2 \int_{S_j} (q -q_h) ^2 \, dx
\end{align*}
a Gronwall argument yields the claimed estimated \eqref{q-qh}. 
\end{proof}

\section{Proof of the Convergence Theorem \ref{Thm5.3}}
\label{sec:conv}

Thanks to the estimates in the previous section \ref{sec:errestim} we are ready to prove the main result. We follow the lines of \cite[Theorem~5.3]{D99} but need to also derive the estimates for $c - c_h$ and repeat some arguments for the convenience of the reader. 

\begin{proof}
First of all note that from standard ODE theory we have local existence and uniqueness of a discrete solution $(u_h,c_h)$ of \eqref{eq:wpgeoh}, \eqref{eq:wpspdeh}. So let $T^* \in (0,T]$ be the maximal time for which we have that
\begin{equation} \label{assuT}
\begin{split}
\tfrac{C^{**}}{2} &\leq |u_{hx}| \leq 2 C^{*} \mbox{ on } [0,T^*], \\
\|c_{h}\|^2_{L^2([0,T^*],H^1(S^1))} &\leq 4 \|c\|^2_{L^2([0,T],H^1(S^1))}, \mbox{ and}\\
\|c_{h}\|_{C([0,T^*],L^\infty(S^{1}))} &\leq 2 \embconst \|c\|_{C([0,T],H^{1}(S^1))} 
\end{split}
\end{equation}
where $\embconst$ is a constant for the embedding $H^1(S^1) \hookrightarrow L^\infty(S^1)$. Inserting equation \eqref{q-qh} into \eqref{eq:est_u} (note that \eqref{assu0} and \eqref{assq} are satisfied thanks to \eqref{assuT}) gives for ${\intertime} \in [0,T^*]$ that
\begin{align*}
& \int_0^{\intertime} \int_{S^1} |u_t - u_{ht}|^2 \,dx dt + \int_{S^1} |\tau({\intertime}) - \tau_h({\intertime})|^2 \, dx \\ 
& \quad \leq C \int_0^{\intertime} \int_{S^1} |\tau - \tau_h|^2 \, dx dt + C \int_0^{\intertime} \int_{S^1} |c - c_h|^2 \, dx dt \\
& \quad \quad + C \int_0^{\intertime} \left( \int_0^t \int_{S^1} |\tau-\tau_h|^2 dx dt' + \int_0^t \int_{S^1} |u_t-u_{ht}|^2 dx dt' \right ) \, dt\\
& \quad \quad + C \int_0^{\intertime} \left( \int_0^t \int_{S^1} |c-c_h|^2 dx dt' \right ) \, dt + C \, h^2 \\
& \quad \leq C \int_0^{\intertime} \int_{S^1} |\tau - \tau_h|^2 \, dx dt + C \int_0^{\intertime} \int_{S^1} |c - c_h|^2 \, dx dt \\
& \quad \quad + C \int_0^{\intertime} \int_0^t \int_{S^1} |u_t-u_{ht}|^2 dx dt' dt + C \, h^2 
\end{align*}
where, for the last inequality, we have used the monotonicity of the integrands. For instance,
\[ 
\int_0^{\intertime} \int_0^t \int_{S^1} |c-c_h|^2 dx dt' dt \leq \int_0^{\intertime} \int_0^{\intertime} \int_{S^1} |c-c_h|^2 dx dt' dt \leq C \int_0^{\intertime} \int_{S^1} |c-c_h|^2 dx dt'.
\] 
A Gronwall argument yields that 
\begin{equation} \label{fastda}
\int_0^{\intertime} \int_{S^1} |u_t - u_{ht}|^2 \,dx dt + \int_{S^1} |\tau({\intertime}) - \tau_h({\intertime})|^2 \, dx \leq C h^2 + C\int_0^{\intertime} \int_{S^1} |c-c_h|^2 dx dt. 
\end{equation}
Inserting this estimate into \eqref{q-qh} gives for $t \in [0, T^*]$ (again using the monotonicity of the integrands)
\begin{equation} \label{q-qh2}
\int_{S^{1}} (|u_x(t)|-|u_{hx}(t)|)^2 dx \leq C h^2 + C \int_0^t \int_{S^1} |c-c_h|^2 dx dt' .
\end{equation}

Next, we plug \eqref{fastda} and \eqref{q-qh2} into \eqref{c-c_h} (note that \eqref{assu} is satisfied thanks to \eqref{assuT}) to obtain for ${\intertime} \in [0,T^*]$ that
\[ 
\int_{S^1} |c({\intertime}) - c_{h}({\intertime})|^2 \, dx + C \int_0^{\intertime} \int_{S^1} |c_x - c_{hx}|^2 \, dx dt \leq  C \int_0^{\intertime} \int_{S^1} |c-c_h|^2 dx dt + C h^{2}.
\] 
Applying Gronwall again yields 
\[ 
\int_{S^1} |c({\intertime}) - c_{h}({\intertime})|^2 \, dx + C \int_0^{\intertime} \int_{S^1} |c_x - c_{hx}|^2 \, dx dt \leq C h^2.
\] 
Inserting this into \eqref{fastda} and \eqref{q-qh2} we obtain that 
\begin{multline} \label{mis}
\int_{S^1} \big{(} |\tau({\intertime}) - \tau_h({\intertime})|^2 + |c({\intertime}) - c_{h}({\intertime})|^2 \big{)} dx + \int_{S^{1}} (|u_x({\intertime})|-|u_{hx}({\intertime})|)^2 dx \\
+ \int_0^{\intertime} \int_{S^1} \big{(} |u_t - u_{ht}|^2 + |c_x - c_{hx}|^2 \big{)} dx dt \leq C \, h^2.
\end{multline}

The constants appearing so far do not depend on $T^*$. Since $u_{hx}$ is constant on each grid interval, the above estimate together with classical embedding theory (see for example \cite[Theorem~2.2]{BGH}) implies
\begin{align*}
|u_{hx}(x,t)| & \geq |u_x (x,t)| - \| |u_x (\cdot, t)|- |u_{hx} (\cdot, t)| \|_{L^\infty (S^1)}\\
& \geq C^{**} - \frac{C}{\sqrt{h}}\| |u_x (\cdot, t)|- |u_{hx} (\cdot, t)| \|_{L^2(S^1)} -c\sqrt{h} \|u(\cdot, t )\|_{H^2(S^1)}\\
& \geq C^{**} - C \sqrt{h} - C \sqrt{h} (\|u_0 \|_{H^2 (S^1)} + \| u_t \|_{L^2 ([0,T], H^2(S^1))})\\
& \geq C^{**} - C \sqrt{h} \geq \frac{3}{4}C^{**}\,,
\end{align*}
for all $h \leq h_0$ with $h_0 \in (0,1)$ sufficiently small independently of $T^*$. Similarly, after eventually decreasing $h_0$ (recall also \eqref{Cstar}), $|u_{hx}| \leq \frac{3}{2} C^*$ for all $h \leq h_0$ independently of $T^*$.

Next observe that using \eqref{mis}, \eqref{IE-2}, and embedding theory we can write for $t \in [0,T^*]$
\begin{align*}
\|c_{h}(t) \|_{L^{\infty}(S^1)} &\leq \|I_{h} c(t) \|_{L^{\infty}(S^{1})} + \| (c_{h} - I_{h} c)(t) \|_{L^{\infty}(S^{1})} \\
& \leq \| c \|_{C([0,T], L^{\infty }(S^{1}))} +\frac{C}{\sqrt{h}} \| (c_{h} - I_{h} c )(t)\|_{L^{2}(S^{1})}\\
& \leq \| c \|_{C([0,T], L^{\infty }(S^{1}))} +\frac{C}{\sqrt{h}} ( \| (c_{h} - c)(t) \|_{L^{2}(S^{1})} + \| (c - I_{h} c)(t) \|_{L^{2}(S^{1})})\\
& \leq \embconst \| c \|_{C([0,T], H^{1}(S^{1}))} +\frac{C}{\sqrt{h}} (h + h\| c \|_{C([0,T], H^{1}(S^{1}))}) \\
& \leq \frac{3}{2} \embconst \| c \|_{C([0,T], H^{1}(S^{1}))}
\end{align*}
for all $h \leq h_0$ independently of $T^*$ (after decreasing $h_0$ if required). Using \eqref{mis} we can easily derive that
\[
\|c_{h}\|^2_{L^2([0,T^*],H^1(S^1))} \leq 3 \|c\|^2_{L^2([0,T],H^1(S^1))}
\]
for all $h \leq h_0$ independently of $T^*$ (after decreasing $h_0$ again if required). Continuity of the solution $(u_h,c_h)$ with respect to time yields a contradiction to the maximality of $T^*$. It follows that $T^*=T$ and the theorem is proved.
\end{proof}

\begin{cor}
\label{coroteo}
Under the assumptions of Theorem~\ref{Thm5.3} we have that
\begin{equation}
\sup_{t \in [0,T]} \| u(t)-u_h(t) \|_{H^1(S^1)}^2 \leq C h^2\,.
\end{equation}
\end{cor}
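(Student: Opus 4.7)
The plan is to decompose the $H^1$ norm as $\|u(t) - u_h(t)\|_{H^1(S^1)}^2 = \|u(t) - u_h(t)\|_{L^2(S^1)}^2 + \|u_x(t) - u_{hx}(t)\|_{L^2(S^1)}^2$ and estimate the two summands separately using the bounds already established in Theorem~\ref{Thm5.3}.

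For the spatial derivative part, I would invoke the pointwise identity \eqref{useful}, namely
\[
|u_x - u_{hx}|^2 = |u_x|\,|u_{hx}|\,|\tau - \tau_h|^2 + (|u_x| - |u_{hx}|)^2.
\]
Thanks to \eqref{Cstar} and the upper bound $|u_{hx}| \leq \tfrac{3}{2} C^{*}$ established at the end of the proof of Theorem~\ref{Thm5.3}, the product $|u_x|\,|u_{hx}|$ is uniformly bounded, so integrating in $x$ and taking the supremum in $t$ yields
\[
\sup_{t \in [0,T]} \int_{S^1} |u_x - u_{hx}|^2 \, dx \leq C \sup_{t \in [0,T]} \int_{S^1} \bigl( |\tau - \tau_h|^2 + (|u_x| - |u_{hx}|)^2 \bigr) dx \leq C h^2
\]
directly by \eqref{eq:theoest2}.

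For the $L^2$ part, I would use the fundamental theorem of calculus in time: $u(t) - u_h(t) = (u_0 - u_{h0}) + \int_0^t (u_t - u_{ht}) \, dt'$. Cauchy--Schwarz in time gives $|\int_0^t (u_t - u_{ht}) \, dt'|^2 \leq T \int_0^T |u_t - u_{ht}|^2 \, dt'$, whence
\[
\sup_{t \in [0,T]} \int_{S^1} |u(t) - u_h(t)|^2 \, dx \leq 2 \int_{S^1} |u_0 - I_h u_0|^2 \, dx + 2 T \int_0^T \int_{S^1} |u_t - u_{ht}|^2 \, dx \, dt.
\]
The first term is bounded by $C h^2$ (in fact $C h^4$) via the standard interpolation estimate \eqref{(4.2)}, and the second by \eqref{eq:theoest1}. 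Summing the two contributions yields the claim. There is no real obstacle here; the corollary is essentially an assembly of the bounds \eqref{eq:theoest1}--\eqref{eq:theoest2} via the pointwise identity \eqref{useful} and a time integration of $u_t - u_{ht}$.
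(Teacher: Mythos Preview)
Your proof is correct and follows essentially the same route as the paper: the semi-norm is handled via the identity \eqref{useful} together with the uniform bounds on $|u_x|$ and $|u_{hx}|$ and \eqref{eq:theoest2}, while the $L^2$ part uses the time integration $u(t)-u_h(t) = (u_0 - I_h u_0) + \int_0^t (u_t - u_{ht})\,dt'$ combined with \eqref{(4.2)} and \eqref{eq:theoest1}. You have simply spelled out the Cauchy--Schwarz step in time a bit more explicitly than the paper does.
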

\begin{proof}
From Theorem~\ref{Thm5.3}, \eqref{useful}, and the fact that $|u_x|$ and $|u_{hx}|$ are bounded we obtain immediately the bound for the semi-norm $|u-u_h|_{H^1 (S^1)}$. To prove the $L^2$-bound note that $u(x,t)-u_h (x,t)= u(x,0)-u_h (x,0) + \int_0^t u_t (x,t')- u_{ht} (x,t') dt'$ and use Theorem~\ref{Thm5.3} again with the interpolation result \eqref{(4.2)} for the initial values.
\end{proof}

\section{Numerical simulations}
\label{sec:num}

\subsection{Sources and reaction terms}

We now aim for assessing the results in Theorem \ref{Thm5.3}. Exact solutions to the PDE system \mathref{eq:spgeoA}, \mathref{eq:spspdeA} are difficult to obtain whence we prescribe functions $(u,c)$ and account for source terms to ensure that they are solutions, i.e., we consider
\begin{align}
 u_t - \frac{1}{|u_{x}|} \Big{(} \frac{u_{x}}{|u_{x}|} \Big{)}_{x} - f(c) \frac{u_{x}^\perp}{|u_{x}|} =& \, s_u \label{eq:spgeo_s} \\
 c_t + c \frac{|u_x|_t}{|u_x|} - \frac{1}{|u_x|} \Big{(} \frac{c_x}{|u_x|} \Big{)}_x =& \, s_c \label{eq:spspde_s}
\end{align}
with functions $s_u : S^1 \times [0,T] \to \mbR^2$ and $s_c: S^1 \times [0,T] \to \mbR$.

The required extension of the weak formulation \mathref{eq:wpgeo}, \mathref{eq:wpspde} is straightforward. With respect to the spatial discretization of the source terms we apply the interpolation $I_h$ as follows: Instead of the equations \mathref{eq:wpgeoh}, \mathref{eq:wpspdeh} we have
\begin{align}
\int_{S^1} I_h\big{(} u_{ht} \cdot \varphi_{h} \big{)} |u_{hx}| + \frac{u_{hx}}{|u_{hx}|} \cdot \varphi_{hx} \, dx \, &= \int_{S^1} I_h \big{(} f(c_{h})  \varphi_{h} \big{)}\cdot u_{hx}^\perp + I_h \big{(} s_u \cdot \phi_h \big{)} |u_{hx}| \, dx, \label{eq:wpgeo_sh} \\
\frac{d}{dt} \Big{(} \int_{S^1} c_{h} \zeta_{h} |u_{hx}| \, dx \Big{)} + \int_{S^1} \frac{c_{hx} \zeta_{hx}}{|u_{hx}|} \, dx &= \int_{S^1} I_h(s_c) \zeta_h |u_{hx}| \, dx. \label{eq:wpspde_sh}
\end{align}

\subsection{Time discretization}

We apply a semi-implicit scheme which reads as follows:

\begin{prob}[Fully Discrete Scheme] \label{prob:fullydis}
Given a time step $\delta > 0$, let $M = T/ \delta$ and find functions $u_{\delta h}^{(m)}(\cdot) \in X_h$ and $c_{\delta h}^{(m)}(\cdot) \in Y_h$, $m \in \{ 0, \dots, M \}$, of the form
\begin{equation*}
u_{\delta h}^{(m)}(x)= \sum_{j=1}^{N} u_j^{(m)} \varphi_j(x), \quad c_{\delta h}^{(m)}(x) = \sum_{j=1}^{N} c_j^{(m)} \varphi_j(x)
\end{equation*} 
with $u_j^{(m)} \in \R^2$ and $c_j^{(m)} \in \mbR$, such that $u_{\delta h}^{(0)}(\cdot) = u_{h0}$, $c_{\delta h}^{(0)}(\cdot) = c_{h0}$, and such that for all $\varphi_{h} \in X_h$, $\zeta_{h} \in Y_h$ and all $m \in \{ 0, \dots, M-1 \}$

\begin{align}
& \int_{S^1} I_h \Big{(} \frac{u_{\delta h}^{(m+1)} - u_{\delta h}^{(m)}}{\delta} \cdot \varphi_{h} \Big{)} |u_{\delta hx}^{(m)}| + \frac{u_{\delta hx}^{(m+1)}}{|u_{\delta hx}^{(m)}|} \cdot \varphi_{hx} \, dx \nonumber \\
& \quad \quad = \int_{S^1} I_h \big{(} f(c_{\delta h}^{(m)}) \varphi_{h} \big{)} \cdot (u_{\delta hx}^{(m)})^\perp + I_h \big{(} s_u^{(m+1)} \cdot \varphi_{h} \big{)} |u_{\delta hx}^{(m)}| \, dx, \label{eq:wpgeotauh} \\
& \frac{1}{\delta} \Big{(} \int_{S^1} \big{(} c_{\delta h}^{(m+1)} |u_{\delta hx}^{(m+1)}| - c_{\delta h}^{(m)} |u_{\delta hx}^{(m)}| \big{)} \zeta_{h} \, dx \Big{)} + \int_{S^1} \frac{c_{\delta hx}^{(m+1)} \zeta_{hx}}{|u_{\delta hx}^{(m+1)}|} \, dx \nonumber \\
& \quad \quad = \int_{S^1} I_h \big{(} s_c^{(m+1)} \big{)} \zeta_h |u_{\delta hx}^{(m)}|\, dx. \label{eq:wpspdetauh}
\end{align}
\end{prob}

For a more sophisticated time discretization of PDEs on evolving surfaces we refer, for instance, to \cite{LubManVen2013}.
We solve the above fully discrete problem using the following algorithm:

\begin{algo} \label{algo:solver}
Given data: $N$ (number of nodes), $\delta$ (time step), $(u_0, c_0)$ (initial data), $M$ (number of time steps), $tol$ (abort if any segment length becomes smaller).
\begin{enumerate} 
 \item Set $m=0$. \\
 Initialize $u_{h}^{(0)} = u_{h0} = I_h u_{0}$ and $c_{h}^{(0)} = c_{h0} = I_h c_{0}$ by computing the values $u_{i}^{(0)} = u_{0}(x_{i})$ and $c_{i}^{(0)} = c_{0}(x_{i})$, $i = 1, \dots, N$. \\
 Also, compute $q_{i}^{(0)} = | u_{i}^{(0)} - u_{i-1}^{(0)} |$, $i = 1, \dots, N$. \\
 Abort if $\min_j q_j^{(0)} < tol$.
 \item Compute the vertex positions at time $t^{(m+1)} = (m+1) \delta$ from 
\begin{align*}
&\tfrac{1}{2\delta} (q_{i+1}^{(m)} + q_{i}^{(m)}) u_{i}^{(m+1)} - \tfrac{1}{q_{i}^{(m)}} u_{i-1}^{(m+1)} + \Big{(} \tfrac{1}{q_{i+1}^{(m)}} + \tfrac{1}{q_{i}^{(m)}} \Big{)} u_{i}^{(m+1)} - \tfrac{1}{q_{i+1}^{(m)}} u_{i+1}^{(m+1)} \nonumber \\
&\quad = \tfrac{1}{2} (q_{i+1}^{(m)} + q_{i}^{(m)}) \big{(} \tfrac{1}{\delta} u_{i}^{(m)} + s_{u}(x_{i})^{(m+1)} \big{)} + \tfrac{1}{2} f(c_{i}^{(m)}) \big{(} u_{i+1}^{(m)} - u_{i-1}^{(m)} \big{)}^\perp, \quad i=1, \dots, N,
\end{align*}
 and compute $q_{i}^{(m+1)} = | u_{i}^{(m+1)} - u_{i-1}^{(m+1)} |$, $i = 1, \dots, N$. 
 \item Compute the surface field values at time $t^{(m+1)} = (m+1) \delta$ from 
\begin{align*}
&\Big{(} \tfrac{1}{3\delta} (q_{j+1}^{(m+1)} + q_{j}^{(m+1)}) + \big{(} \tfrac{1}{q_{j+1}^{(m+1)}} + \tfrac{1}{q_{j}^{(m+1)}} \big{)} \Big{)} c_{j}^{(m+1)}  \\
&\quad \Big{(} \tfrac{1}{6\delta} q_{j+1}^{(m+1)} - \tfrac{1}{q_{j+1}^{(m+1)}} \Big{)} c_{j+1}^{(m+1)} + \Big{(} \tfrac{1}{6\delta} q_{j}^{(m+1)} - \tfrac{1}{q_{j}^{(m+1)}} \Big{)} c_{j-1}^{(m+1)} \\
&\quad \quad \quad = \tfrac{1}{3\delta} (q_{j+1}^{(m)} + q_{j}^{(m)}) \big{(} c_{j}^{(m)} + \delta s_{c}(x_{j})^{(m+1)} \big{)} \\
&\quad \quad \quad \quad + \tfrac{1}{6\delta} q_{j+1}^{(m)} \big{(} c_{j+1}^{(m)} + \delta s_{c}(x_{j+1})^{(m+1)} \big{)} + \tfrac{1}{6\delta} q_{j}^{(m)} \big{(} c_{j-1}^{(m)} + \delta s_{c}(x_{j-1})^{(m+1)} \big{)}, \quad j = 1, \dots, N.
\end{align*}
 \item If $\min_j q_j^{(m+1)} \geq tol$ and $m+1<M$ then increase $m$ by one and go to step 2.
\end{enumerate}
\end{algo}

Observe that the parametrization doesn't feature any more in the algorithm. The identities in steps two and three are straightforward to compute. For instance, step two is easily be obtained from the continuous version \eqref{eq:odegeo}.

\subsection{A radially symmetric solution}

Consider a radially symmetric setting and denote by $R(t)$ and $B(t)$ the radius of the evolving circle and the constant (in space) value of $c$ along the circle, respectively. We pick $\nu$ to be the outer unit normal of the enclosed ball. Then $v = R'(t)$ and $\kappa = -1/R(t)$. 
The system \mathref{mod1:evol}, \mathref{mod1:spde} becomes
\begin{equation} \label{eq:radsymprob}
 R'(t) = -\frac{1}{R(t)} + f(B(t)), \quad B'(t) + \frac{B(t) R'(t)}{R(t)} = 0.
\end{equation}

We consider the forcing function
\[
 f(B) = 2B-1.
\]
Note that this function is not bounded and thus doesn't satisfy the assumptions of Theorem \ref{Thm5.3}. However, the values of $B$ in the subsequent simulations are bounded. We may therefore think of cutting off $f$ at suitable high and low values which are outside of the computed values and locally smooth it sufficiently. This doesn't alter the computational results but the Theorem then applies.

The constant functions $(R(t),B(t)) = (1,1)$ are a stationary and stable solution to \mathref{eq:radsymprob}. The solution for initial values $R(0) = 1.25$ and $B(t) = 0.8$ converges back to this stable point and has been approximated with a standard MATLAB routine for the comparison in Figure \ref{fig:radsymsol}.

Now let $h = 1/N$ with $N \in \mbN$ and define the initial position of the curve approximation by $$u_j^{(0)} = R(0) (\cos(2\pi j/h), \sin(2\pi j/h))$$ in which we set $c_j^{(0)} = B(0)$. Furthermore, we set $\delta = h^2$. We then perform numerical simulations with the scheme described in Algorithm \ref{algo:solver}. In order to be able to compare with the solution to the ODE system \mathref{eq:radsymprob} we use the length of the computed polygon divided by $2\pi$ and the average of the values of $c_h$ in the nodes,
\[
R_{\delta h}^{(m)} = \frac{1}{2\pi} \sum_{j=1}^N q_j^{(m)}, \quad B_{\delta h}^{(m)} = \frac{1}{N} \sum_{j=1}^N c_j^{(m)}, \qquad m \in \{ 0, \dots, M \}.
\]

Figure \ref{fig:radsymsol} gives a nice impression of the convergence as the computational effort is increased. Note that the errors essentially are due to the spatial discretization. We checked that changing the time step only has a marginal impact on the graphs.

\begin{figure}
\begin{center}
\includegraphics[width=7cm]{./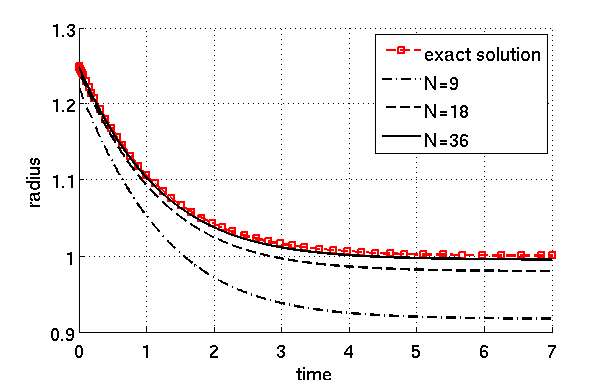} \hfill \includegraphics[width=7cm]{./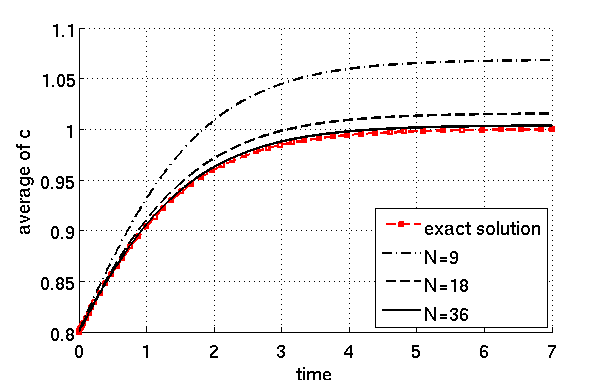}
\end{center}
\caption{Numerical solutions for the radially symmetric solution. The solution $(R(t),B(t))$ to the ODE \mathref{eq:radsymprob} is displayed as well as the solutions $(R_{\delta h},B_{\delta h})$ obtained via Algorithm \ref{algo:solver} for several values of $N$.}
\label{fig:radsymsol}
\end{figure}

\subsection{An oscillating solution}
\label{subsec:sol_oscill}

Consider now the functions
\[
 u(x,t) = 
 \begin{pmatrix} 
  \big{(} 1 + \tfrac{1}{2} \sin(2\pi t) \big{)} \cos(2\pi x) \\
  \big{(} 1 - \tfrac{1}{2} \sin(2\pi t) \big{)} \sin(2\pi x)
 \end{pmatrix}
\]
and
\[
 c(x,t) = t \cos(8\pi x) + (1-t) \sin(6\pi x)
\]
for $x \in [0,1]$ and $t \in [0,T]$ with $T = 1$. Let $f(c) = 2c$ (with regards to the lack of bound the remark in the previous section applies again). Then $(u,c)$ is a solution to \mathref{eq:spgeo_s}, \mathref{eq:spspde_s} if the source terms are given by (writing $s_u = (s_{u1},s_{u2})$)
\begin{align*}
s_{u1} &= \pi \cos(2 \pi  t) \cos(2 \pi  x) \\ 
& - \frac{2 \sqrt{2} \cos(2 \pi  x) (-2+\sin(2 \pi  t)) (t \cos(8 \pi  x) + (1-t) \sin(6 \pi  x))}{\sqrt{9-\cos(4 \pi  t)-4 \sin(2 \pi  (t-2 x))-4 \sin(2 \pi  (t+2 x))}} \\
& + \frac{8 \cos(2 \pi  x) (-2+\sin(2 \pi  t))^2 (2+\sin(2 \pi  t))}{(-9+\cos(4 \pi  t)+4 \sin(2 \pi  (t-2 x))+4 \sin(2 \pi  (t+2 x)))^2}, \displaybreak[0] \\
s_{u2} &= -\pi \cos(2 \pi  t) \sin(2 \pi  x) \\
& + \frac{2 \sqrt{2} \sin(2 \pi  x) (2+\sin(2 \pi  t)) (t \cos(8 \pi  x) + (1-t) \sin(6 \pi  x))}{\sqrt{9-\cos(4 \pi  t)-4 \sin(2 \pi  (t-2 x))-4 \sin(2 \pi  (t+2 x))}} \\
& + \frac{8 \sin(2 \pi  x) (-2+\sin(2 \pi  t)) (2+\sin(2 \pi  t))^2}{(9-\cos(4 \pi  t)-4 \sin(2 \pi  (t-2 x))-4 \sin(2 \pi  (t+2 x)))^2}, \displaybreak[0] \\
s_c &= \cos(8 \pi  x)-\sin(6 \pi  x) \\ 
& + \frac{8 ( 16 t \cos(8 \pi  x)+9 (1-t) \sin(6 \pi  x) )}{9-\cos(4 \pi  t)-4 \sin(2 \pi  (t-2 x))-4 \sin(2 \pi  (t+2 x))} \\ 
& - \frac{128 \cos(2 \pi  x) \sin(2 \pi  t) \sin(2 \pi  x) (3 (-1+t) \cos(6 \pi  x)+4 t \sin(8 \pi  x))}{(9-\cos(4 \pi  t)-4 \sin(2 \pi  (t-2 x))-4 \sin(2 \pi  (t+2 x)))^{2}} \\
& + \frac{4 \pi  \cos(2 \pi  t) (-2 \cos(4 \pi  x)+\sin(2 \pi  t)) (t \cos(8 \pi  x)-(-1+t) \sin(6 \pi  x))}{9-\cos(4 \pi  t)-4 \sin(2 \pi  (t-2 x))-4 \sin(2 \pi  (t+2 x))}.
\end{align*}

For the numerical simulations with Algorithm \ref{algo:solver} we monitored the following errors:
\begin{equation}
\label{eq:err_defs}
\begin{split}
& \mmm{E}_1 := \sup_{t^{(m)}} \int_{S^1} | c - c_h |^2 dx, \quad \mmm{E}_2 := \sup_{t^{(m)}} \int_{S^1} | \tau - \tau_h |^2 dx, \quad \mmm{E}_3 := \sup_{t^{(m)}} \int_{S^1} (|u_x| - |u_{hx}|)^2 dx, \\
& \mmm{E}_4 := \sum_m \delta \int_{S^1} \Big{|} u_t^{(m+1)} - \frac{u_{\delta h}^{(m+1)} - u_{\delta h}^{(m)}}{\delta} \Big{|}^2 dx, \quad \mmm{E}_5 := \sum_m \delta \int_{S^1} | c_x^{(m+1)} - c_{\delta hx}^{(m+1)} |^2 dx
\end{split}
\end{equation}
where we used sufficiently accurate quadrature rules on each interval $S_j$ for the spatial integration. 

We first picked several values for $N$ as displayed in Table \ref{tab:conv_h} and time steps of the size $\delta = h^2$ where $h = 1/N$. We checked that by this choice of the time step the spatial discretization error is dominating. The EOCs of $\mmm{E}_2$, $\mmm{E}_3$, and $\mmm{E}_5$ are close to two which is what Theorem \ref{Thm5.3} asserts. The error in $c_x$ is relatively high but this isn't surprising in view of the spatial oscillations of the surface quantity $c$ which are at a higher frequency than those of the position field $u$. In turn, the EOCs of $\mmm{E}_1$ and $\mmm{E}_4$ are close to four and thus better than Theorem \ref{Thm5.3} predicts, a behavior which may be expected for $\mmm{E}_1$. 

We also assessed the discretization error with respect to the time stepping. For the results in Table \ref{tab:conv_h2} we fixed a very fine spatial mesh with $N=2001$ nodes and varied the time step. Note that our semi-implicit time discretization is of consistency order one. In accordance with this the EOCs of all errors are close to two for all fields. The drops of the EOCs of some errors for small time steps (from about $m=5$ in Table \ref{tab:conv_h2}) are due to the spatial discretization error becoming more significant.

\begin{table}
\begin{center}
\begin{tabular}{|r||l|l|l|l|l|l|l|l|l|l|} \hline 
N & $\mmm{E}_1 \times 10$ & $\eoc_1$ & $\mmm{E}_2 \times 10^2$ & $\eoc_2$ & $\mmm{E}_3$ & $\eoc_3$ & $\mmm{E}_4 \times 10^2$ & $\eoc_4$ & $\mmm{E}_5$ & $\eoc_5$ \\ \hline \hline 
  21 &  1.2912950 & -     & 2.08142 & -     &  1.15896 & -     &  8.047392 & -    &  25.488 & -     \\ \hline 
  61 &  0.0228319 &  3.78 & 0.17548 &  2.32 &  0.04038 &  3.15 &  0.133915 & 3.84 &  2.3531 &  2.23 \\ \hline 
 121 &  0.0015801 &  3.90 & 0.04280 &  2.06 &  0.00637 &  2.70 &  0.009039 & 3.94 &  0.5707 &  2.07 \\ \hline 
 241 &  0.0001023 &  3.97 & 0.01066 &  2.02 &  0.00135 &  2.26 &  0.000581 & 3.98 &  0.1420 &  2.02 \\ \hline 
 401 &  0.0000134 &  3.99 & 0.00384 &  2.01 &  0.00047 &  2.08 &  0.000076 & 3.99 &  0.0511 &  2.01 \\ \hline 
 701 &  0.0000014 &  4.00 & 0.00126 &  2.00 &  0.00015 &  2.03 &  0.000008 & 4.00 &  0.0167 &  2.00 \\ \hline 
1101 &  0.0000002 &  4.00 & 0.00051 &  2.00 &  0.00006 &  2.01 &  0.000001 & 4.00 &  0.0068 &  2.00 \\ \hline 
\end{tabular}
\end{center}
\caption{Errors and EOCs for the test problem described in Section \ref{subsec:sol_oscill}. Note that the errors have been rounded but the EOCs have been computed using the complete numbers. $N$ is the number of nodes, $h = 1/N$ is the spatial step size, and $\delta = h^2$ is the step size in time. The error terms are defined in \mathref{eq:err_defs}.}
\label{tab:conv_h}
\end{table}

\begin{table}
\begin{center}
\begin{tabular}{|r||l|l|l|l|l|l|l|l|l|l|} \hline 
$m$ & $\mmm{E}_1 \times 10 $ & $\eoc_1$ & $\mmm{E}_2 \times 10^2$ & $\eoc_2$ & $\mmm{E}_3$ & $\eoc_3$ & $\mmm{E}_4 \times 10$ & $\eoc_4$ & $\mmm{E}_5 / 10$ & $\eoc_5$ \\ \hline \hline 
0 &  1.23630 & -     &  1.60499 & -     &   1.32017 & -     &  1.03219 & -     &  1.08707  & -    \\ \hline 
1 &  0.45345 &  1.45 &  0.42590 &  1.91 &   0.46027 &  1.52 &  0.33786 &  1.61 &  0.38561  & 1.50 \\ \hline 
2 &  0.14034 &  1.69 &  0.11195 &  1.93 &   0.14138 &  1.70 &  0.09858 &  1.78 &  0.11754  & 1.71 \\ \hline 
3 &  0.03979 &  1.82 &  0.02900 &  1.95 &   0.04043 &  1.81 &  0.02689 &  1.88 &  0.03303  & 1.83 \\ \hline 
4 &  0.01069 &  1.90 &  0.00746 &  1.96 &   0.01099 &  1.88 &  0.00705 &  1.93 &  0.00895  & 1.88 \\ \hline 
5 &  0.00278 &  1.94 &  0.00196 &  1.93 &   0.00290 &  1.92 &  0.00181 &  1.96 &  0.00247  & 1.86 \\ \hline 
6 &  0.00071 &  1.97 &  0.00057 &  1.78 &   0.00076 &  1.94 &  0.00046 &  1.98 &  0.00078  & 1.66 \\ \hline 
7 &  0.00018 &  1.97 &  0.00022 &  1.36 &   0.00021 &  1.88 &  0.00012 &  1.98 &  0.00035  & 1.16 \\ \hline 
\end{tabular}
\end{center}
\caption{Errors and EOCs for the test problem described in Section \ref{subsec:sol_oscill}. Note that the errors have been rounded but the EOCs have been computed using the complete numbers. The time step size is given by $\delta = 0.02 \times 2^{-m}$, the spatial step size is fixed at $h = 1/N$ where $N = 2001$ is the number of nodes. The error terms are defined in \mathref{eq:err_defs}.}
\label{tab:conv_h2}
\end{table}

\bibliography{refs}
\bibliographystyle{acm}

\end{document}